\documentclass[a4paper,12pt,oneside,reqno]{amsart}
\usepackage{hyperref}
\usepackage[headinclude,DIV13]{typearea}
\areaset{15.1cm}{25.0cm}
\parskip 0pt plus .5pt
\usepackage{txfonts,amssymb,amsmath,amsthm,bbm}
\usepackage[latin1] {inputenc}
\usepackage{graphicx, psfrag}
\usepackage{xcolor}
\usepackage{subfigure}

\newtheorem{theorem}{Theorem}[section]
\newtheorem{lemma}[theorem]{Lemma}

\newtheorem{definition}[theorem]{Definition}

%%%%%Colours
\definecolor{bbm}{RGB}{51,153,0}
\definecolor{above}{RGB}{128,0,128}
\definecolor{below}{RGB}{102,0,204}
\definecolor{cascade}{RGB}{204,0,0}
\definecolor{iid}{RGB}{153,51,0}
%%%%

\theoremstyle{remark}
\newtheorem*{remark}{Remark}

\def\paragraph#1{\noindent \textbf{#1}}

\numberwithin{equation}{section}

\def\d{\mathrm{d}}

\def\<{\langle}
\def\>{\rangle}

\def\e{\epsilon}

\def\g{\gamma}

\def\l{\lambda}

\def\s{\sigma}
\def\t{\tau}

\def\D{\Delta}

\def\del{\partial}
\def\R{{\Bbb R}}  
\def\N{{\Bbb N}}  
\def\P{{\Bbb P}}  
\def\Z{{\Bbb Z}}

\def\E{{\Bbb E}}

\let\cal=\mathcal
\def\AA{{\cal A}}
\def\BB{{\cal B}}

\def\EE{{\cal E}}
\def\FF{{\cal F}}
\def\GG{{\cal G}}

\def\TT{{\cal T}}

\def\UU{{\cal U}}

 \def \s {{\sigma}}

 \def \D {{\Delta}}
 \def \t {{\tau}}

 \def \g {{\gamma}}
 \def \l {{\lambda}}
 \def \d {{\delta}}

 \def \del {{\partial}}

 \def \ba {\begin{array}}
 \def \ea {\end{array}}

%%%%%%%%%%%%%%%%%%%% Macros %%%%%%%%%%%%%%%%%%%%%%%%%

%\def \D {{\mathbb D}}

 \newcommand{\be}{\begin{equation}}
 \newcommand{\ee}{\end{equation}}

\newcommand{\bea}{\begin{eqnarray}}
 \newcommand{\eea}{\end{eqnarray}}
\def\TH(#1){\label{#1}}\def\thv(#1){\ref{#1}}
\def\Eq(#1){\label{#1}}\def\eqv(#1){(\ref{#1})}

 \def \1{\mathbbm{1}}

\def\wh{\widehat}

%%%%%%%%%%%%%%%%%%%% Macros %%%%%%%%%%%%%%%%%%%%%%%%%

\def\eee{{\mathrm e}}

\overfullrule=0pt
 \DeclareMathOperator{\sech}{sech}

\begin{document}

 %%%%%%%%% TITLE PAGE %%%%%%%%%%%%%%%%%%%%%%%%%%

 \title[BBM with self-repulsion]{Branching Brownian motion with self repulsion}
\author[A. Bovier]{Anton Bovier}
 \address{A. Bovier\\Institut f\"ur Angewandte Mathematik\\
Rheinische Friedrich-Wilhelms-Universität\\ Endenicher Allee 60\\ 53115 Bonn, Germany }
\email{bovier@uni-bonn.de}
\author[L. Hartung]{Lisa Hartung}
 \address{L. Hartung\\
  Institut für Mathematik \\Johannes Gutenberg-Universität Mainz\\
Staudingerweg 9,
55099 Mainz, Germany}
\email{lhartung@uni-mainz.de}

\date{\today}

 \begin{abstract}  
 We consider a model of branching Brownian motion with self repulsion. Self-repulsion is introduced via change of measure that penalises particles spending time in an $\e$-neighbourhood of   each other. We derive a simplified version of the model 
 where only branching events are penalised. This model is almost exactly solvable and we derive a precise description 
 of the particle numbers and branching times.   In the limit of weak penalty, an interesting universal time-inhomogeneous 
 branching process emerges. The position of the maximum is governed by a F-KPP type reaction-diffusion equation with 
 a time dependent reaction term. 
 
 \end{abstract}

\thanks{
This work was partly funded by the Deutsche Forschungsgemeinschaft (DFG, German Research Foundation) under Germany's Excellence Strategy - GZ 2047/1, Projekt-ID 390685813 and GZ 2151 - Project-ID 390873048,
through Project-ID 211504053 - SFB 1060, through Project-ID 233630050 -TRR 146, through Project-ID 443891315  within SPP 2265, and Project-ID 446173099.
 }

\subjclass[2000]{60J80, 60G70, 82B44} \keywords{branching Brownian motion, excluded volume, 
extreme values, F-KPP equation} 

 \maketitle

 %%%%%%%%% END TITLE PAGE %%%%%%%%%%%%%%%%%%%%%%%%%%

\section{Introduction}
 Branching Brownian motion (BBM)  \cite{Moyal62,AdkeMoyal63} can be seen as an elementary model
 for the evolution of a population of individuals that are subject to birth, death, and motion in space.
 One of the primary interests in this model was the analysis of the speed of spread of
 such a population in space, as well as finer properties of the front. 
 Indeed,   BBM was investigated form the point of view of extreme value theory 
over the last 40 year, see,  e.g.,
\cite{B_M,LS,chauvin88,chauvin90,ABK_G,ABK_P,ABK_E,ABBS,CHL17,bbm-book}.

As a model for population dynamics, BBM is somewhat unrealistic as it leads to uncontrolled 
exponential growth of the population size. In fact, in the standard normalisation, the 
population size grows like $\exp(t)$, while the population spreads over a volume of order $t$, 
leading to an unsustainable density of the population. Several variants of the model that resolve this 
problem have been proposed where, according to some selection rule,  offspring is selected to 
survive in such a way that the total population size stays controlled \cite{BD97,Mallein17,CorMal17,maillard2020}. 
Versions where competitive interactions between particles are present were considered, e.g.  in 
\cite{Engl2010,Engl2015,Penington2017,Penington2019}.

In this paper, we propose a model where the population size is controlled by penalising 
the fact that particles stay close to each other. Before defining the model precisely,
 recall that BBM is constructed as follows:
 start with a single particle which performs a standard Brownian motion $x(t)$  in $\R$ 
 with $x(0)=0$ and continues for a standard exponentially distributed  holding time $T$,
independent of $x$. At time $T$, the particle splits independently of $x$ and $T$ into $k$ offspring with probability $p_k$, where $\sum_{i=1}^\infty p_k=1$,
$\sum_{k=1}^\infty k p_k=2$ and $K=\sum_{k=1}^\infty k(k-1)p_k<\infty$.  In the present paper,
we choose the simples option, $k_2=1$, all others zero, except in Section 8, where
we allow for $p_0>0$. These particles continue along independent Brownian paths starting from $x(T)$ and are subject to the same splitting rule.
And so on. We let $n(t)$ denote the number of particles at time $t$ and label 
the particles at time $t$    arbitrarily by  $1,2,3,\dots, n(t)$, and denote by  $x_1(t),\dots, x_{n(t)}(t)$ 
the positions of these particles at that time. For $s\leq t$, we let $x_i(s)$ be the position of the 
ancestor of particle $i$ at time $s$. We  denote by $\P$ the law of BBM.

Alternatively, BBM can be constructed as a Gaussian process indexed by a  
continuous time 
Galton-Watson tree with mean zero and covariances, conditioned on the Galton-Watson tree, given by
\be\Eq(variance.2.1)
\E\left[ x_k(s)x_\ell(r)|\s(GW)\right]= d(x_k(s),x_\ell(t))\wedge s\wedge r,
\ee
where $d(x_k(t),x_\ell(t))$ is the 
time of the most recent common ancestor of the particles labeled $k$ and $\ell$ in the
Galton-Watson tree.

For $t<\infty$ and for some $\e>0$, we define the penalty function 
\be\Eq(int.1)
I_t(x) \equiv \int_0^t\sum_{i\neq j=1}^{n(s)} \1_{|x_i(s)-x_j(s)|\leq \e}ds.
\ee
(The notation here is not quite consistent, as the labelling of the $n(s)$ particles at time $s$ 
is changing with $s$. This can be remedied by using the Ulam-Kesten-Harris labelling of
the tree, but maybe this is not necessary here.)
We are interested in the law of $x_t$ under the tilted measure $P_{t,\s}$ given by
\be\Eq(tilt.1)
P_{t,\s} (A) \equiv \frac {\E\left[\1_{x_t\in A} \eee^{-\l I_t(x)}\right]}
{\E\left[\eee^{-\l I_t(x)}\right]},
\ee
for any  Borel set  $A$.
The function $I_t$ measures the total time when any two particles stay within a distance $\e$ up to time $t$. This seems to a be reasonable measure for competitive pressure. 
In a typical realisation of BBM, the density of particles at time $s$ will be of order $\eee^s/s$, and hence the $\epsilon$-neighbourhood of any particle contains $\e\eee^s/s$ other particles. Thus, for a typical configuration $x$ of BBM,  $I_t(x)\sim \e\eee^{2t}/(2t)$. This penalty is most easily avoided by reducing the particle number by not branching. For a particle to not branch up to time $t$ has probability $\eee^{-t}$, which is far less costly. Reducing the particle density by making the
particles move much farther apart would be far more costly.

\paragraph{A simplified model.}
Analysing the measure $P_{t,\s}$ directly seems rather difficult. We suggest an approximation that 
should share the qualitative features of the full measure. For this we consider a lower bound on 
$I_t$. Note that, whenever branching occurs, the offspring start at the same point and thus are all 
closer than $\e$ Let us for simplicity take a branching law such that $p_2=1$, i.e. 
only binary branching occurs. Then we can bound
\be\Eq(approx.1)
I_t(x) \geq  I'_t(x)\equiv\sum_{i=1}^{n(t)-1} \t_{\e}(i),
\ee
where $\t_\e(i)$ is the first time the two Brownian motions that start at the $i$-th branching event
are a distance $\e$ apart.  For small $\e$, the probability that one of the two branches branches again before the time $\t_\e$
is of order $\e^2$, so that it will be a good approximation to treat the 
 $ \t_{\e}(i)$ as independent and having the same distribution as 
\be
\Eq(tau.1)
\t_\e\equiv \inf\{t>0: |B_t|>\e\}.
\ee
Then, %conditioned on the number of particles at time $t$, 
\be\Eq(approx.2)
\E \left[\eee^{-\l  \sum_{i=1}^{n(t)-1} \t_{\e}(i)}\right] \approx
\E\left[\E \left[\eee^{-\l \t_{\e}}\right]^{n(t)-1}\right]\equiv \E\left[ \s^{n(t)-1}\right].
\ee
 But (as follows from Theorem 5.35 and Proposition 7.48 in \cite{MoePer2010}), 
 \be
 \Eq(approx.3)
 \E \left[\eee^{-\l \t_{\e}}\right] =\sech(\e \sqrt {2\l}),
 \ee
 which for small $\l\e^2$ behaves like $\exp(-\l\e^2)$. Note that we also have, by Jensen's inequality, that 
 \be\Eq(approx.3.1)
 \E \left[\eee^{-\l  \sum_{i=1}^{n(t)-1} \t_{\e}(i)}\right]\leq  \E \left[\eee^{-\l  \sum_{i=1}^{n(t)-1}\E [\t_{\e}(i)]}\right]=
 \E \left[\eee^{-\l \e^2(n(t)-1)}\right].
 \ee
One might think that the approximate model is a poor substitute for the full model, since it ignores 
the repulsion of particles after the time that they first separate. However, as we will see shortly, 
already $I'_t(x)$ suppresses branching so much that the total number of particles will stay finite for any time. Hence we can expect that these finitely many particles can remain separate rather easily 
and that the remaining effect of $I_t$ will be relatively mild. 

\paragraph{Outline.} The remainder of this paper is organised as follows. In Section 2 we derive exact formulas for the partition function, the particle number, and the first branching time in the simplified model. In Section 3 we introduce the notion
of \emph{quasi-Markovian Galton-Watson trees}. In Section  4   we show that the branching times in the  simplified model 
are given by such a tree. In Section 5, we consider the limit when $\l\downarrow 0$ and derive a universal asymptotic
model, which is a specific  quasi-Markovian Galton-Watson tree. In Section 6, we consider the position of the maximal 
particle and show that its distribution is governed by a F-KPP equation with time dependent reaction term 
and analyse the behaviour of its solutions.
We discuss the relation of the approximate model to the full model in Section 7. 
In Section 8, we briefly look at the case when $p_0>0$. In this case, the process dies out and we derive the rate at which the
number of particles tends to zero.

\section{Partition function, particle numbers, and first branching time }

\subsection{The partition function}
The first object we consider is the normalising factor or \emph{partition function}
\be\Eq(pf.1)
v_\s(t)\equiv %\E \left[\eee^{-\l \t_{\e}}\right]^{n(t)-1} =
\E \left[ \s(\l,\e)^{n(t)-1}\right].
\ee
\begin{lemma}\TH(pf.2) Let
$\tilde v_\s(t)$ be the solution of the ordinary differential equation 
\be\Eq(pf.3)
\frac d{dt} \tilde v_\s(t) =  \s(\l,\e)  \tilde v_\s(t)^2 -\tilde v_\s(t),
\ee
with initial condition $\tilde v_\s(0)=1$.
Then 
$ v_\s(t)=\tilde v_{\s(\l,\e)}(t)$.
\end{lemma}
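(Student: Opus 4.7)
The plan is to derive an integral equation for $v_\s(t)$ by conditioning on the first branching time of the underlying BBM, then differentiate to obtain the ODE, and invoke uniqueness of solutions.

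First I would check the initial condition: $v_\s(0) = \E[\s^{n(0)-1}] = \s^0 = 1$. Then I would condition on the time $T$ of the first branching event, which is exponentially distributed with rate $1$ and independent of everything else. On the event $\{T>t\}$ the process has not branched, so $n(t)=1$ and the contribution to $v_\s(t)$ is simply $e^{-t}$. On the event $\{T=s\leq t\}$, the branching property of BBM says that the two particles born at time $s$ independently produce the progenies of two fresh BBMs, so
\be\Eq(plan.1)
n(t) \laweq n^{(1)}(t-s) + n^{(2)}(t-s),
\ee
with $n^{(1)},n^{(2)}$ i.i.d.\ copies of $n(t-s)$. Writing $n^{(1)}+n^{(2)}-1 = (n^{(1)}-1)+(n^{(2)}-1)+1$, the independence yields
\be\Eq(plan.2)
\E\lb \s^{n(t)-1}\,\big|\, T=s\rb \;=\; \s\,\E\lb \s^{n^{(1)}(t-s)-1}\rb\,\E\lb \s^{n^{(2)}(t-s)-1}\rb \;=\; \s\, v_\s(t-s)^2.
\ee

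Combining these cases gives the integral equation
\be\Eq(plan.3)
v_\s(t) \;=\; e^{-t} + \int_0^t e^{-s}\,\s\, v_\s(t-s)^2\, ds,
\ee
where the factor of $\s$ accounts for the extra particle created at the branching. After the change of variable $u=t-s$ this reads $v_\s(t) = e^{-t}\bigl(1 + \s\int_0^t e^{u} v_\s(u)^2\,du\bigr)$, which is differentiable in $t$. Differentiating yields
\be\Eq(plan.4)
\frac{d}{dt} v_\s(t) \;=\; -v_\s(t) + \s\, v_\s(t)^2,
\ee
which is exactly \eqv(pf.3).

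Finally, I would note that the right-hand side of \eqv(pf.3) is locally Lipschitz in $v$, so by Picard--Lindel\"of the ODE with initial condition $\tilde v_\s(0)=1$ has a unique solution on its maximal interval of existence; since $v_\s$ satisfies the same ODE with the same initial datum (and since $|v_\s(t)|\leq 1$ for $\s\in(0,1]$, so no blow-up occurs), we conclude $v_\s(t)=\tilde v_{\s(\l,\e)}(t)$. No step here is a real obstacle — the only place that requires a bit of care is the bookkeeping of the extra factor $\s$ in \eqv(plan.2) coming from the $-1$ in the exponent combined with the two subtrees, but this is exactly why the nonlinear term of the ODE appears with coefficient $\s$ rather than $1$.
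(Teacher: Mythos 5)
Your proposal is correct and follows essentially the same route as the paper: condition on the first branching time to get the renewal equation $v_\s(t)=e^{-t}+\int_0^t e^{-s}\s\,v_\s(t-s)^2\,ds$ (the paper writes the equivalent form with $s\mapsto t-s$), then differentiate to obtain \eqv(pf.3) with $v_\s(0)=1$. Your explicit appeal to Picard--Lindel\"of uniqueness and the a priori bound $v_\s\leq 1$ is a small extra rigor step that the paper leaves implicit, but it does not change the argument.
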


\begin{proof}
The derivation of the ode is similar to that of the F-KPP equation for BBM  (see \cite{bbm-book}). Clearly, $v_\s(0)=1$. 
Conditioning on the time if the first branching event, we get 
\be\Eq(pf.4)
v_\s(t) =\eee^{-t} + \int_0^t ds \eee^{-(t-s)} \s(\l,\e) v_\s(s)^2.
\ee
Differentiating with respect to $t$ gives 
\bea\Eq(pf.5)
\frac d{dt} v_\s(t) &=&-\eee^{-t} +\s(\l,\e)  v_t(t)^2 -\int_0^t ds \eee^{-(t-s)} \s(\l,\e) v_\s(s)^2
\nonumber\\
&=&\s(\l,\e) v_\s(t)^2 -v_\s(t).
\eea
Thus $v_\s$ solves the equation \eqv(pf.4) with $s=\s(\l,\e)$, which proves the lemma.
\end{proof}

A first inspection of Eq. \eqv(pf.3) shows why the cases $\l=0$ and $\l>0$ are vastly different.

Equation \eqv(pf.3) has the two fix points $0$ and $1/\s$. Here $0$ is stable and $ 1/\s$ is unstable.
Hence all solutions with initial condition $0\leq v_\s(0)<1/\s$ will converge to $0$, while solutions with $ v_\s(0)>1/\s$ will tend to infinity. Only the special initial condition $v_\s(0)=1/\s$ will lead to
the constant solution. Since we start with the initial condition $v_\s=1$, 
if $\l=0$ and hence $\s=1$, we get this special constant solution, while for $\l>0$, the solution will tend to zero. 
%initi
%In the former case, the equation has two fixpoints, $v_0=0$ and $v_0=1$,
%and since the initial condition is one, $v_0(t)\equiv 1$. If $\l>0$, the second fixpoint becomes 
%$v_\s= 1/\s(\l,\e)>1$, and since for  $v_\s \leq 1$,  $\s(\l,\e) v_\s(t)^2 -v_\s(t)<0$,
%$v_\s(t)$ is decreasing and thus can only tend to the fixpoint zero. 
In fact, we can solve \eqv(pf.3) exactly. To do this it is convenient to define 
\be\Eq(pf.6)
\hat v_\s(t)\equiv \eee^t \tilde v_\s(t).
\ee
Then $\hat v_\s$ solves 
\be\Eq(pf.7)
\frac d{dt}\hat v_\s(t) =  \s \hat v_\s(t)^2\eee^{-t},
\ee 
also with initial condition $\hat v_\s(0)=1$. Dividing both sides by $\hat v_\s^2$, 
this can be written as
\be
\Eq(pf.8)
- \frac  d{dt} \frac 1{\hat v_\s(t)} =\s \eee^{-t},
\ee
which can be integrated to give
\be\Eq(pf.9)
-\frac 1{\hat v_\s(t)}+\frac 1{\hat v_\s(0)} = \s\left(1- \eee^{-t}\right),
\ee
or 
\be\Eq(pf.9.1)
{\hat v_\s(t)} =\frac 1{ \frac 1{\hat v_\s(0)} -\s\left(1- \eee^{-t}\right)},
\ee
and 
\be\Eq(pf.9.2)
{\tilde v_\s(t)} =\frac 1{ \eee^{t}\left( \frac 1{\tilde v_\s(0)} -\s\right)+\s}.
\ee

Using the initial condition $\hat v_\s(0)=1$,  we get 
\be\Eq(pf.10)
\tilde v_\s(t)=  \frac{\eee^{-t}}{ \left(1 - \s\right)+\s\eee^{-t}}.
\ee
Thus, provided $\s<1$, 
\be\Eq(pf.11)
\lim_{t\uparrow\infty}
\hat v_\s(t)=  \frac{1}{1 -\s }.
\ee

\subsection{Particle numbers}
From the formula for the partition function we can readily infer the mean number of particles at time $t$, nramely,
\be\Eq(mean.1)
\wh E_{\l,t} n(t)=1+ \s\frac d{d\s} \ln \hat v_\s(t) =
1+ \s\frac{1-\eee^{-t}}{1-\s(1-\eee^{-t})}=\frac 1{1-\s(1-\eee^{-t})},
\ee
and for $t\uparrow \infty$ this converges to $1/(1-\s)$. For small $\l\e^2$, this in turn behaves like 
$(\l\e^{2})^{-1}$.

\def\ttt{q}
%\section{Distribution of particle numbers over time}
In fact, we can even compute the distribution of the number of particles at times $s\leq t$. 
To do so, we want to compute the Laplace (Fourier) transforms
\be\Eq(s.1)
\wh E_{t,\s}\left[\eee^{\g n(s)}\right]=\frac {\E \left[\eee^{\g n(s)}\s^{n(t)-1}\right]}{\E \left[\s^{n(t)-1}\right]} =\frac {\E \left[\eee^{\g n(s)}\s^{n(t)-1}\right]}{v_\s(t)}.
\ee
The denominator has already been calculated. For the numerator we write
\bea\Eq(s.2)
\E \left[\eee^{\g n(s)}\s^{n(t)-1}\right]&=&
\E \left[\eee^{\g n(s)}   \E\left[\s^{n(t)-1}\big|\FF_s\right]\right]\nonumber\\
&=&\E \left[\eee^{\g n(s)}   \E\left[\s^{ \sum_{i=1}^{n(s)} n^{(i)} (t-s)-1}\big |\FF_s\right]\right],
\eea
where $n^{(i)} (t-s)$ are the number of particles at time $t$ that have particle $i$ as common ancerstor at time $s$. 
Using the  independence properties,
this equals
\bea\Eq(s.3)
&&\E \left[\eee^{\g n(s)} \s^{n(s)-1} \left( \E\left[\s^{ n (t-s)-1}\right]\right)^{n(s)}\right]\nonumber\\
&&= \eee^{\g}v_\s(t-s)\E \left[\left(\eee^{\g}\s v_\s(t-s)\right)^{n(s)-1}\right]\nonumber\\
&&=  \eee^{\g}v_\s(t-s) \frac {\eee^{-s}}{1-\eee^{\g}\s v_\s(t-s)(1-\eee^{-s})}\nonumber\\
&&= \frac {\eee^{-s}}{\eee^{-\g}v_\s(t-s)^{-1}-\s(1-\eee^{-s})}\nonumber\\
&&= \frac {\eee^{-s}}{\eee^{-\g}\left(1-\s(1-\eee^{-t+s})\right)\eee^{t-s}-\s(1-\eee^{-s})}\nonumber\\
&&= \frac {\eee^{-t}}{\eee^{-\g}\left(1-\s(1-\eee^{-t+s})\right)-\s(\eee^{s-t}-\eee^{-t})}.
\eea
Dividing by $v_\s(t)$, we arrive at 
\be\Eq(s.4)
\wh E_{t,\s}\left[\eee^{\g n(s)}\right]=
 \frac {1-\s(1-\eee^{-t})}{\eee^{-\g}\left(1-\s(1-\eee^{-t+s})\right)-\s(\eee^{s-t}-\eee^{-t})}.
 \ee
 From this  exact formula we can derive various special cases.

\begin{theorem}\TH(geo.1)
\begin{itemize}
\item[(i)]
Under the measure $\wh P_{\l,t}$, the number of particles at time $t$ is geometrically distributed with parameter $1-\s(\l,\e)(1-\eee^{-t})$.
In particular, the number of particles converges, as $t\uparrow\infty$, to a geometric random variable with parameter $1-\s(\l,\e)$. 
\item[(ii)] As $t\uparrow \infty$, the number of particles at time $s(t) = t+\ln(1-\s)+\rho$, for all 
$\rho\leq -\ln(1-\s)$,
converges in distribution to a geometric random variable with parameter\\ $(1+\s\eee^{\rho})^{-1}$.

\end{itemize}
\end{theorem}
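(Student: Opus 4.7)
The strategy is to specialize and then take limits in the explicit formula \eqv(s.4) for the conditional Laplace transform of $n(s)$ under $\wh P_{t,\s}$, matching the result against the probability generating function of a geometric random variable.

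For part (i), I would simply set $s=t$ in \eqv(s.4). Then $\eee^{-t+s}=1$ and $\eee^{s-t}=1$, so the formula collapses to
\be
\wh E_{t,\s}\left[\eee^{\g n(t)}\right] = \frac{1-\s(1-\eee^{-t})}{\eee^{-\g}-\s(1-\eee^{-t})}.
\ee
Writing $p=1-\s(1-\eee^{-t})$ and $1-p=\s(1-\eee^{-t})$, this equals $p\eee^{\g}/(1-(1-p)\eee^{\g})$, which is the generating function of the geometric distribution on $\{1,2,3,\ldots\}$ with success parameter $p$. Letting $t\uparrow\infty$ gives $p\to 1-\s$, and since pointwise convergence of probability generating functions on a neighborhood of the origin implies convergence in distribution of the integer-valued variables, the limit is geometric with parameter $1-\s$.

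For part (ii), I would substitute $s=t+\log(1-\s)+\rho$ into \eqv(s.4) and let $t\uparrow\infty$ with $\rho$ held fixed (the constraint $\rho\leq -\log(1-\s)$ ensures $s\leq t$, so the formula applies). Since $\eee^{s-t}=(1-\s)\eee^{\rho}$ is constant in $t$ while $\eee^{-t},\eee^{-s}\to 0$, one computes termwise:
\begin{align}
1-\s(1-\eee^{-t}) &\to 1-\s, \\
1-\s(1-\eee^{-t+s}) &\to (1-\s)(1+\s\eee^{\rho}), \\
\s(\eee^{s-t}-\eee^{-t}) &\to \s(1-\s)\eee^{\rho}.
\end{align}
After cancelling the factor $1-\s$, the limit of the right-hand side of \eqv(s.4) becomes
\be
\frac{1}{\eee^{-\g}(1+\s\eee^{\rho})-\s\eee^{\rho}}.
\ee
Writing $p=(1+\s\eee^{\rho})^{-1}$ so that $1-p=\s\eee^{\rho}\,p$, this rearranges to $p\eee^{\g}/(1-(1-p)\eee^{\g})$, which is again the geometric generating function with parameter $p$. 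The convergence of generating functions in a neighborhood of the origin then yields the claimed distributional convergence.

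The plan has no real obstacle beyond careful bookkeeping: the substance is contained in the explicit identity \eqv(s.4), and both parts reduce to algebraic manipulation followed by the classical fact that convergence of probability generating functions on an interval around $\g=0$ implies convergence in distribution of $\N$-valued random variables. The slightly subtle point to verify is that the limit formulas above are valid not just formally but as genuine limits of generating functions on a neighborhood of $\g=0$, which follows because the denominators are continuous and nonzero at $\g=0$ (where they equal $1-\s>0$ and $1-\s\eee^{\rho}/(1+\s\eee^{\rho})>0$ respectively).
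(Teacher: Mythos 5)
Your proposal is correct and follows essentially the same route as the paper: substitute $s=t$ (resp.\ $s=t+\ln(1-\s)+\rho$) into the explicit identity \eqv(s.4), simplify, and recognize the resulting expression as the generating function of a geometric distribution. The only difference is that you spell out the justification that pointwise convergence of generating functions near $\g=0$ gives convergence in distribution, which the paper leaves implicit.
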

\begin{proof}
 Inserting $s=t$ into \eqv(s.4) we get that 
 \be\Eq(s.t)
\wh E_{t,\s}\left[\eee^{\g n(t)}\right]=
 \frac {1-\s(1-\eee^{-t})}{\eee^{-\g}-\s(1-\eee^{-t})},
 \ee
 which is the Laplace transform of the geometric distribution with parameter
  $1-\s(\l,\e)(1-\eee^{-t})$. This implies (i). 
  Similarly,   with $s=t+\ln (1-\s) +\rho$, and $\rho\leq -\ln(1-\s)$
 \be\Eq(s.5)
\wh E_{t,\s}\left[\eee^{\g n(s)}\right]=
 \frac {1-\s(1-\eee^{-t})}{\eee^{-\g}\left(1-\s+\s(1-\s)\eee^{\rho}\right)-\s(1-\s)(\eee^{\rho}-\eee^{-t})}.
 \ee
  If we now take $t\uparrow\infty$, we get 
\be\Eq(s.6)
 \lim_{t\uparrow\infty}
 \wh E_{t,\s}\left[\eee^{\g n(t+\ln(1-\s)+\rho)}\right]=
  \frac {1}{\eee^{-\g}\left(1+\s\eee^{\rho}\right)-\s\eee^{\rho}}
  =  \frac {(1+\s\eee^{\rho})^{-1}}{\eee^{-\g} -\frac{\s\eee^{\rho}}{1+\s\eee^{\rho}}},
  \ee
which is the Laplace transform of the geometric distribution with parameter $(1+\s\eee^{\rho})^{-1}$.
  
 \end{proof}
 
 \begin{remark}
 Note that, for fixed $s$, taking the limit $t\uparrow \infty$, we get unsurprisingly 
 $\eee^{\g}$, indicating that there is just one particle.

We see that the mean number of particles ranges from $1$ (as $\rho\downarrow -\infty$), $1+\s$ (for $\rho=0$), 
to $1-\s$ (for $\rho=-\ln (1-\s)$).  
Note that if $\s=1$, $n(t)$ is geometric with parameter $\eee^{-t}$, which corresponds to 
 BBM with binary branching.

%If we let $\s=\eee^{-\l\e^2}$ and let $\l\downarrow 0$, we arrive at the distribution of the particle number in the asymptotic model
%as being geometric with parameter $1/(1+\eee^{\rho})$.
\end{remark}

\subsection{Distribution of the first branching time}
We have seen so far that the repulsion strongly suppresses the number of branchings.
% Next we show 
%branchings occur only shortly before the terminal time $t$. 
 The first branching time is then 
\be 
\t_1\equiv \inf\{ s>0:n(s)=2\}.
\ee

\begin{theorem}\TH(branch.1)
The distribution of the first branching time under $\wh P_{\l,t}$ is given by
 \be\Eq(branch.2)
 \wh P_{\l,t} \left(\t_1\leq t-r\right)  =\s(\l,\e)
 \frac {\eee^{-r}-\eee^{-t}}{1-\s(\l,\e)(1-\eee^{-r})}.
 \ee
\end{theorem}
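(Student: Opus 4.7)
My plan is to extract the probability of the event $\{\t_1>t-r\}$ directly from the joint Laplace transform \eqv(s.4). The key observation is that under binary branching ($p_2=1$), the number of particles equals $1$ if and only if no branching has yet occurred, so $\{\t_1>s\}=\{n(s)=1\}$ almost surely. Taking $s=t-r$, it therefore suffices to compute $\wh P_{\l,t}(n(t-r)=1)$ and subtract from $1$.

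To read off this atom, I would substitute $z=\eee^{\g}$ in \eqv(s.4) at $s=t-r$ to obtain the probability generating function
\be
\wh E_{t,\s}\bigl[z^{n(t-r)}\bigr]=
\frac{z\,\bigl(1-\s(1-\eee^{-t})\bigr)}{\bigl(1-\s(1-\eee^{-r})\bigr)-\s z\bigl(\eee^{-r}-\eee^{-t}\bigr)}.
\ee
Since $n(t-r)\geq 1$ a.s.\ and the right-hand side is rational and analytic at $z=0$, the coefficient of $z^1$ in its Taylor expansion around $z=0$ is $\wh P_{\l,t}(n(t-r)=1)$, which can be read off directly as
\be
\wh P_{\l,t}(n(t-r)=1)=\frac{1-\s(1-\eee^{-t})}{1-\s(1-\eee^{-r})}.
\ee

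Taking the complement and placing the two fractions over the common denominator $1-\s(1-\eee^{-r})$, the numerator collapses to $\s(\eee^{-r}-\eee^{-t})$, which is precisely the stated formula. The only conceptual step is the identification $\{\t_1>s\}=\{n(s)=1\}$; everything else is elementary manipulation of the explicit expression, and I do not anticipate any real obstacle. Should one prefer a derivation that does not invoke \eqv(s.4), an alternative is to condition on $\t_1=u\in[0,t-r]$ and use the branching property to obtain
\be
\E\bigl[\1_{\t_1\leq t-r}\,\s^{n(t)-1}\bigr]=\int_0^{t-r}\eee^{-u}\,\s\, v_\s(t-u)^2\,du,
\ee
and then apply the recursion \eqv(pf.4) once directly and once after the change of variables $w=t-u$, so as to recognise this integral as $v_\s(t)-\eee^{-(t-r)}v_\s(r)$; dividing by $v_\s(t)$ and substituting the explicit formula \eqv(pf.10) yields the same result.
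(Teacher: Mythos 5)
Your primary argument is correct and takes a genuinely different route from the paper. The paper conditions directly on the first branching time, defines the unnormalised mass $v_\s(t,t-r)=\E[\s^{n(t)-1}\1_{\t_1\le t-r}]$, writes it as $\int_0^{t-r}\eee^{-s}\s\,v_\s(t-s)^2\,ds$, and evaluates that integral explicitly using \eqv(pf.10). You instead observe that for binary branching without deaths, $\{\t_1>s\}=\{n(s)=1\}$, so the first-branching distribution is already encoded in the particle-number Laplace transform \eqv(s.4); you read off the coefficient of $z^1$ in the PGF $\wh E_{t,\s}[z^{n(t-r)}]=zA/(B-Cz)$, which is $A/B=\frac{1-\s(1-\eee^{-t})}{1-\s(1-\eee^{-r})}$, and the complement gives \eqv(branch.2). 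This is shorter and avoids re-doing an integral; its only cost is that it presupposes Theorem~\thv(geo.1)'s computation (or at least \eqv(s.4)) and explicitly requires $p_0=0$, which the paper's direct conditioning argument makes more visible. Your alternative argument is essentially the paper's conditioning decomposition, but you evaluate the integral more elegantly: after the change of variables $w=t-u$ you split $\int_r^t=\int_0^t-\int_0^r$ and invoke the recursion \eqv(pf.4) twice to obtain $v_\s(t)-\eee^{-(t-r)}v_\s(r)$, rather than computing the antiderivative by hand as in \eqv(gen.3). Both of your routes check out algebraically and give the stated formula.
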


\begin{proof}
 Note that after the first branching, there will be two independent BBMs that run for the remaining 
 time $t-\t_1$ and that are subject to the same penalty as before. In particular, given $\t_1$, 
 the total particle number $n(t)$ is equal to the sum of the number of 
 particles in these two branches, 
 \be
 \Eq(rec.1)
 n(t)= \tilde n^{(0)}(t-\t_1)+ \tilde n^{(1)}(t-\t_1),
 \ee
 where $ \tilde n^{(i)}$ are the particles in the two branches that split at time $\t_1=e_0(0)$. 
Denote by $v_\s(t,r)$ the unnormalised mass of paths that branch before time $t-r\leq t$, i.e. 
set
\be\Eq(gen.1)
v_\s(t,t-r)\equiv \E \left[\s(\l,\e)^{n(t)-1}\1_{\t_1\leq t- r}\right].
\ee
We get 
\bea\Eq(gen.2)
v_\s(t,t-r)&=& \int_0^{t-r} \eee^{-s} \E \left[ \s(\l,\e)^{\tilde n^{(0)}(t-s)+ \tilde n^{(1)}(t-s) -1}\right] ds\nonumber\\
&=& \int_0^{t-r} \eee^{-s} \s(\l,\e)
 \E \left[ \s(\l,\e)^{ n(t-s)-1}\right]\E\left[\s(\l,\e)^{ n(t-s)-1}\right]ds\nonumber\\ 
 &=&
\int_0^{t-r} ds \eee^{-s} \s(\l,\e) v_\s(t-s)^2ds. % =\s(\l,\e) \eee^{-2t} \int_0^{t-r} ds  \eee^{s}  \hat v_\s(t-s)^2ds ,% +\d_{r,t} \eee^{-t},
 \eea
 Since $v_\s(t-s)$ is known, this is an explicit formula, 
 namely 
\bea\Eq(gen.3)
 v_\s(t,t-r) &=&\s(\l,\e) \eee^{-2t} \int_0^{t-r} ds
 \eee^{s}  \frac 1{(1-\s(\l,\e)(1-\eee^{-(t-s)}))^2}\nonumber\\
 &=&\eee^{-t}\s(\l,\e)\frac {\eee^{-r}-\eee^{-t}}{(1-\s(\l,\e)(1-\eee^{-t}))(1-\s(\l,\e)(1-\eee^{-r}))}.
 \eea
 Since 
  $
 \wh P_{\l,t} \left(\t_1\leq t-r\right) =\frac {v_\s(t,t-r)}{v_\s(t)}$, 
  \eqv(branch.2) follows. 
  \end{proof}

 %=\eee^{-t} \int_0^{t-r} ds
 %\eee^{s}  \frac {\s(\l,\e)(1-\s(\l,\e)(1-\eee^{-t}))}{(1-\s(\l,\e)(1-\eee^{-(t-s)}))^2}.
%\ee
% 
% 
% 
\begin{remark} Note that, for $r$ fixed, $\wh P_{\l,t} \left(\t_1\leq t-r\right)$ converges, as $t\uparrow\infty$,
 to 
\be\Eq(gen.5)
 \frac {\s(\l,\e)\eee^{-r}}{1-\s(\l,\e)(1-\eee^{-r})}.
\ee
 Note further  that  $v_\s(t)= v_\s(t,t) +\E\1_{\t_1>t}=  v_\s(t,t) +\eee^{-t}$ and therefore
  \be\Eq(gen.3.2)
    \wh P_{\l,t} \left(\t_1\leq t\right) =\frac {v_\s(t,t)}{v_\s(t)}
    =1-\eee^{-t}/v_\s(t)<1.
    \ee
    \end{remark}
\def\ttt{q}
\section{Quasi-Markovian time-inhomogeneous Galton-Watson trees}

In this section we introduce a class of models that are continuous-time version of Galton-Watson processes
that are time-inhomogeneous and that in general are not Markov, but have an underlying discrete-time Markov 
property. These processes emerge in the models introduced above. 

We  start with discrete time trees and we introduce the usual Ulam-Harris labelling. 

 Let us define the set of (infinite) multi-indices
\be\Eq(multi.1)
\mathbf{I}\equiv \Z_+^\N,
\ee
and let $\mathbf{F}\subset \mathbf{I}$ denote the subset of multi-indices that contain 
only  finitely many entries that are different from zero. Ignoring leading zeros, we see
that 
\be\Eq(multi.2)
\mathbf{F} = \cup_{k=0}^\infty \Z_+^k,
\ee
where $\Z_+^0$ is either the empty multi-index or the multi-index containing only 
zeros.
A discrete-time tree is then identified by a consistent sequence of sets of multi-indices, $\ttt(n)$ at time $n$ as follows.
%Denote by $W(t)$ the total  number of branchings 
% that happened in $[0,t]$. Moreover, we 
%denote by $0=t_0\leq t_1\leq t_2\leq\dots\leq t_{W(t)}$ the branching times in increasing order. 
% At any of the times $t_i$, each vertex $u$ of the tree will branch into $l^u(t_i)$ forward branches, 
%i.e., 
%%% Note that almost surely, at any time $t_j$, there  will be at most one vertex for which $l^u(t_j)>1$. 

%We want to label the (infinite) tree in a consistent way with a family of (infinite) multi-indices $u(t)\in \N_0^{\N}$.   
%We identify $\wt T_t$ with a subset $\ttt(n)$ of infinite sequences of non-negative integers. That gives us a labelling of $\wt T_t$ that is consistent in time:
\begin{itemize}
 \item $\{(0,0,\dots)\} =\{u(0)\}=\ttt(0)$.
% \item  for all $j\geq 0$,  for all  $t\in [t_j,t_{j+1})$,   $\ttt(n)=\ttt(t_j)$.
 \item If $u\in \ttt(n)$ then $u+(\underbrace{0,\dots,0}_{n\times 0}, k,0,\dots)\in 
 \ttt(n+1)$ if $0\leq k\leq l^u(n)-1$, where
 \be\Eq(map.1)
l^u(n)=\# \{\mbox{ offsprings of  the particle corresponding to }u\, \mbox{at time}\, n \}.
\ee
\end{itemize}
%  We call the
%resulting tree $\wt T_t$. 
%Observe that the this identification is independent of the Brownian motion of the particles.
We can relate the assignment of labels in a backwards consistent fashion as follows.
For  $u\equiv (u_1,u_2,u_3,\dots)\in \Z_+^\N$, we define the function $u(r), r\in \R_+$,  through
\be\Eq(multi.3)
u_\ell(r)\equiv \begin{cases}  u_\ell,&\,\, \hbox{\rm if}\,\, \ell\leq r,\\
0,&\,\, \hbox{\rm if}\,\, \ell> r.
\end{cases}
\ee
Clearly,  if $u(n)\in \ttt(n)$ and  $r\leq t$, then $u(r)\in \ttt(r)$.  This allows to define the 
\emph{boundary} of the 
tree at infinity  as follows:
\be\Eq(multi.4)
\del \mathbf{T} \equiv \left\{ u\in \mathbf{I}: \forall n<\infty, u(n)\in \ttt(n)\right\}.
\ee
%Note that $\del\mathbf{T}$ is an \emph{ultrametric} space equipped with the 
%ultrametric $m(u, v)\equiv \eee^{-d(u,v)}$, where  
%$d(u,v)=\sup \{t\geq 0: u(t)=v(t)\}$ is the time.We of 
% their most recent common ancestor.

We also want to be able to consider a branch of a tree as an entire new tree. For this we use the notation 
$\overleftarrow{u} =(u_1,u_2,u_3,\dots)$ if $u=(u_0,u_1,u_2,\dots)$.

Given a discrete-time tree, we can turn it into a continuous-time tree by assigning 
waiting times to each vertex, resp. to each multi-index in the tree. E.g., in the case of the standard continuous-time 
Galton-Watson tree, we simply assign standard, iid, exponential random variables, $e_u(n)$,  to each vertex, resp. multi-index. Note that we choose the notation in such a way that we think of $u$ as an element of the boundary of the tree, and $e_u(n)$ is the waiting time attached to the vertex labelled $u(n)$ (in the $n$-th generation).
This time represents the waiting time from the birth of this branch to its next branching.
This assigns a total time, $T_u(n)$  for the branching of a multi-index at discrete time $n$, as
\be\Eq(tree.1)
T_u(n)=t_0+\sum_{k=0}^n e_u(k),
\ee
where $t_0\in\R$ is an initial time associated to the root of the tree and
  $e_{u}(0) $ is the time of the first branching of the root of the tree.
We denote by $\FF_n$ the $\s$-algebra generated by the branching times of the first $n$ generations of the tree, i.e.
\be\Eq(sigma.1)
\FF_n\equiv \s\left(t_0, e_k(u), k\leq n, u\in \del \mathbf{T}\right).
\ee
We need to define further $\s$-algebras that correspond to events that take place in sub-trees. 
For a given multi-index $u$, define the set of multi-indices that coincide with $u$ in the first $n$ 
entries, 
\be\Eq(forward.1)
\UU_n(u)\equiv \left\{v\in \mathbf{T}: \forall k\leq n, v_k=u_k\right\}.
\ee
Naturally, this is the subtree that branches off the branch $u$ in the $n$-th generation.
Next, we define the $\s$-algebra generated by the times in these subtrees,
\be\Eq(forward.2)
\GG_n(u)\equiv \s\left(e_v(k), v\in \UU_n(u), k\leq n\right).
\ee 

\begin{definition}\TH(normal.1)
A set  $\EE_n(u)\in \GG_n(u)\subset$ is called \emph{normal}, if it is of the form 
\be\Eq(quasi.1)
\EE_n(u)=\{e_n(u)\leq  r\}\cap \EE_{n+1}((u(n),0))\cap \EE_{n+1}((u(n),1)),
\ee
or if
\be
\EE_n(u)= \R_+^{\mathbf{T}},
\ee
where the two events $\EE_{n+1}$ are normal.
We say that a normal event $\EE_n(u)$ has finite horizon, if there exists a $\infty>N\geq n$ such that 
$\EE_n(u)\in \FF_N$.
\end{definition}

\begin{definition}\TH(qm.1)
We say that the assignment of branching times is quasi-Markov (with time horizon $t$), if there is a family of probability measures, $Q_{t,T}, t\in\R_+, T\leq t$ on $\GG_0(0)$ and a family of probability measures $q_{t,T}, t\in \R_+, T\leq t$ on $(\R_+,\BB(\R_+))$ 
that have the following property.  For any event $\EE_0(0)\in \GG_0(0)$ which is of the form 
\be\Eq(quasi.1)
\EE_0(0)=\{e_0(0)\leq  r\}\cap \EE_{1}((0,0))\cap \EE_{1}((0,1)),
\ee
where $\EE_1(u)\in\GG_1(u)$, for all $t_0<r<t-t_0$,
\be\Eq(quasi.2)
Q_{t,t_0}(\EE_0(0))=\int_{t_0}^rq_{t,t_0}(ds) Q_{t,t_0+s}(\EE_1(\overleftarrow{00}))Q_{t,t_0+s}(\EE_1(\overleftarrow{01})).
\ee
\end{definition}

\begin{lemma} \TH(qm.2)
The measures $Q_{t,t_0}$ on the $\s$-algebra generated by the normal events with finite horizon in $\GG_0(u)$ are uniquely determined by the 
family of measures $q_{t,s}, s\leq t$. 
$q_{t,s}$ is the law of $e_u(n)$ conditioned on $T_{n-1}(u)=s$. 
\end{lemma}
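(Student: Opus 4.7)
The plan is to establish uniqueness by showing that the $Q_{t,t_0}$-measure of any normal event with finite horizon is determined explicitly by the family $\{q_{t,s}\}_{s\leq t}$ via a finite iterated integral, and then to extend to the generated $\sigma$-algebra by a monotone class argument. First I would proceed by induction on the horizon $N$ of the normal event $\EE_0(0)$. When $N=0$, the only normal events are $\R_+^{\mathbf{T}}$ and $\emptyset$, whose measure is trivially determined. For the inductive step, a normal event with horizon at most $N$ has the recursive form \eqv(quasi.1), where each $\EE_1((0,j))$ (for $j=0,1$) is a normal event, measurable with respect to the subtree $\sigma$-algebra $\GG_1((0,j))$, of horizon at most $N-1$. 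Relabelling each subtree so that the multi-index $(0,j)$ becomes its new root and translating time by $s$, the event $\EE_1((0,j))$ corresponds canonically to a normal event $\EE_1^{\leftarrow}(0)$ with horizon at most $N-1$ in the root $\sigma$-algebra, and \eqv(quasi.2) reads
\be
Q_{t,t_0}(\EE_0(0)) = \int_{t_0}^{r} q_{t,t_0}(ds)\, Q_{t,t_0+s}(\EE_1^{\leftarrow}(0)) \, Q_{t,t_0+s}(\EE_1^{\leftarrow}(0)).
\ee
By the inductive hypothesis, the two subtree measures in the integrand are already uniquely determined by the $q_{t,\cdot}$ family, and hence so is the left-hand side.

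Next I would extend this to the $\sigma$-algebra generated by the class $\NN$ of normal events with finite horizon. The key point is that $\NN$ is closed under finite intersections: given two normal events $\EE$ and $\EE'$ of horizons $N$ and $N'$, one checks by a parallel induction, using that
\be
(\{e_n(u)\leq r\} \cap A)\, \cap\, (\{e_n(u)\leq r'\} \cap A') = \{e_n(u)\leq r\wedge r'\} \cap (A\cap A'),
\ee
together with the fact that intersections in the two disjoint subtrees $\UU_{n+1}((u(n),0))$ and $\UU_{n+1}((u(n),1))$ are independent operations, that $\EE\cap\EE'$ is again normal of horizon at most $\max(N,N')$. Hence $\NN$ is a $\pi$-system containing the whole space, and by Dynkin's $\pi$--$\lambda$ theorem any two probability measures on $\sigma(\NN)$ that agree on $\NN$ must coincide. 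Combined with the previous step this gives uniqueness of $Q_{t,t_0}$ on the $\sigma$-algebra of interest.

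For the identification of $q_{t,s}$ as the conditional law of $e_u(n)$ given $T_{n-1}(u)=s$, I would proceed by picking the degenerate choice $\EE_1((0,0))=\EE_1((0,1))=\R_+^{\mathbf{T}}$ in \eqv(quasi.1), which reduces \eqv(quasi.2) to $Q_{t,t_0}(\{e_0(0)\leq r\})= q_{t,t_0}([t_0,r])$; this identifies $q_{t,t_0}$ as the marginal law of $e_0(0)$ under $Q_{t,t_0}$. Iterating along a branch $u$ up to generation $n$, inserting the trivial event in each subtree not traversed by $u$, the repeated application of \eqv(quasi.2) produces an integral in which the innermost factor is exactly $q_{t,T_{n-1}(u)}(de_u(n))$ against a product of preceding factors depending only on the earlier branching times. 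Reading off this innermost factor gives the stated conditional-law interpretation.

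The main obstacle I anticipate is the bookkeeping required to make the $\pi$-system argument rigorous, in particular verifying carefully that the subtree re-rooting used in the induction is compatible with the recursive structure of normal events and that $\NN$ really is stable under intersection; the computational content beyond this verification is essentially tautological given \eqv(quasi.2).
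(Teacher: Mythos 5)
Your proposal is correct and follows essentially the same route as the paper's (very terse) proof: iterate \eqv(quasi.2) to express the $Q_{t,t_0}$-measure of any normal event of finite horizon as an iterated integral against $q$, observe that the normal finite-horizon events form a $\pi$-system, and invoke Dynkin's theorem. You have simply supplied the details the paper leaves implicit (the induction on horizon, the verification of intersection stability, and the identification of $q_{t,s}$ as the conditional law of $e_u(n)$ given $T_{n-1}(u)=s$ by plugging in trivial subtree events). Note a small typo: in your displayed recursion the two subtree factors should be $Q_{t,t_0+s}(\EE_1(\overleftarrow{00}))$ and $Q_{t,t_0+s}(\EE_1(\overleftarrow{01}))$ — the two subtree events need not coincide — but this does not affect the argument.
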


\begin{proof}
From \eqv(quasi.2) it follows by simple iteration that the measure of any normal event of finite horizon is expressed 
uniquely in term of $q$. Noting further that the set of finite horizon events is intersection stable, the assertion follows from 
Dynkin's theorem.
\end{proof}
%
%
%f, for all $n\in \N_0$ and $u\in \mathbf{T}$,
%the family of random variables $(e_u(n), u(n) \in\ttt(n))$ are independent conditional on $\FF_{n-1}$  and 
%\be\Eq(markov.1)
%\P\left(e_n(u) \leq s|\FF_{n-1}\right)=\P\left(e_n(u) \leq s| \s(T_{n-1}(u))\right),
%\ee
%that is, the branching time of any new branch is only a function of the time at which it was born.
%We will allow for the possibility that a new branch will never branch again. 

The total tree at continuous time $t$ is then described as follows:

\begin{itemize}
\item[(i)] The branches of the tree alive are
\be\Eq(alive.1)
\AA(t)\equiv\left\{ u(n): u \in \mathbf{T}, n\in \N_0 \;\text{s.t.}\;  T_{n-1}(u)\leq t<T_n(u)\right\}.
\ee  
\item[(ii)] The entire tree up        to time $t$ is the set 
\be\Eq(alive.2)
\TT(t)\equiv \left\{ u(k): k\leq n, u(n)\in \AA(t)\right\}.
\ee
\end{itemize}
Note that both sets are empty if $t<t_0$. It is a bit cumbersome to write, but the distribution of the set 
$\TT(t)$ together with the lengths of all branches can be written down explicitly in terms of the 
 laws $q$ and the branching laws of the underlying discrete-time tree.

\section{The simplified model as quasi-Markov Galton-Watson tree}

We return to the approximate model defined in Section 1. For simplicity, we keep the assumption that 
the underlying tree is binary. We first show that the branching times under the law $\wh P_{\l,t}$ 
define a quasi-Markov Galton-Watson tree. 

\begin{lemma} \TH(qm.10) The branching times of the simplified model under the law 
 $\wh P_{\l,t}$ are quasi-Markov, where $Q_{t,T}$ is the marginal distribution of 
 $\wh P_{\l,t-T}$
   with $q_{t,T} $ that is absolutely continuous
 w.r.t. Lebesgue mesure with density 
 \be\Eq(qm.11)
 \frac{\eee^{-s} \s(\l,\e) v_\s(t-s-T)^2}{v_\s(t-T)}\1_{s\geq T},
 \ee
 namely, 
 \be\Eq(qm.12)
\wh P_{\l,t-t_0}(\EE_0(0))=\int_{t_0}^rq_{t,t_0}(ds) \wh P_{\l,t-s}(\EE_1(\overleftarrow{00}))
\wh P_{\l.t-s}(\EE_1(\overleftarrow{01})).
\ee

 \end{lemma}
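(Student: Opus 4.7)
The plan is to condition on the first branching time and invoke the branching property of BBM, paralleling the computation in the proof of Theorem \thv(branch.1) but tracking the law of the two subtrees created at that branching. First unfold
\begin{equation*}
\wh P_{\l,t-t_0}(\EE_0(0)) = v_\s(t-t_0)^{-1}\, \E\bigl[\s^{n(t-t_0)-1}\, \1_{\EE_0(0)}\bigr],
\end{equation*}
and write $s$ for the absolute time of the first branching, so $s\geq t_0$ and, under $\P$, $s$ has density $\eee^{-(s-t_0)}$; the bound on $e_0(0)$ imposed by the normal event \eqv(quasi.1) becomes $s\leq r$.

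Next, apply the branching property: after the first branching at time $s$, BBM decomposes into two i.i.d.\ copies each running for time $t-s$. Thus $n(t-t_0) = \tilde n^{(0)}(t-s) + \tilde n^{(1)}(t-s)$ and
\begin{equation*}
\s^{n(t-t_0)-1} \;=\; \s\,\cdot\,\s^{\tilde n^{(0)}(t-s)-1}\, \s^{\tilde n^{(1)}(t-s)-1},
\end{equation*}
while, by the Ulam--Harris labelling, $\EE_1(\overleftarrow{00})$ and $\EE_1(\overleftarrow{01})$ are measurable with respect to the $\s$-algebras of the first and second subtree respectively. Fubini and independence then give
\begin{equation*}
\E\bigl[\s^{n(t-t_0)-1}\, \1_{\EE_0(0)}\bigr] \;=\; \int_{t_0}^{r} \eee^{-(s-t_0)}\, \s \prod_{i=0,1} v_\s(t-s)\,\wh P_{\l,t-s}\bigl(\EE_1(\overleftarrow{0i})\bigr)\, ds.
\end{equation*}

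Dividing by $v_\s(t-t_0)$ produces \eqv(qm.12) and reads off
\begin{equation*}
q_{t,t_0}(ds) \;=\; \frac{\eee^{-(s-t_0)}\, \s\, v_\s(t-s)^2}{v_\s(t-t_0)}\, \1_{s\geq t_0}\, ds,
\end{equation*}
which agrees with \eqv(qm.11) under the absolute-time interpretation of $s$. Using the recursion \eqv(pf.4) (shifted to start at $t_0$) one finds that $q_{t,t_0}$ has total mass $\s(1-\eee^{-(t-t_0)})<1$: the defect $\eee^{-(t-t_0)}/v_\s(t-t_0)$ is precisely the probability of no branching in $[t_0,t]$ recorded in \eqv(gen.3.2), so $q_{t,t_0}$ is most naturally read as a sub-probability measure (or extended by a mass at $+\infty$ representing the single-vertex tree). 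The main obstacle is not this computation, which is a light decoration of Theorem \thv(branch.1), but the passage from the basic normal events \eqv(quasi.1) to the whole $\s$-algebra of finite-horizon normal events asserted by the lemma; this is handled either by iterating the identity above along the tree or directly by the Dynkin argument of Lemma \thv(qm.2), which shows that the measures $Q_{t,t_0}$ are already uniquely determined by the family $\{q_{t,s}\}_{s\leq t}$.
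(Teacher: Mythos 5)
Your proof is correct and follows essentially the same route as the paper's: condition on the absolute time of the first branching, apply the branching property and independence of the two subtrees, rewrite the two sub-expectations as $v_\s(t-s)\,\wh P_{\l,t-s}(\cdot)$, and divide by $v_\s(t-t_0)$. The extra remarks you add — that $q_{t,t_0}$ has total mass $\s(1-\eee^{-(t-t_0)})<1$ with defect $\eee^{-(t-t_0)}/v_\s(t-t_0)$ (consistent with \eqv(gen.3.2)), and that the extension from basic normal events to all finite-horizon normal events is already covered by the Dynkin argument of Lemma \thv(qm.2) — are accurate observations but not needed for the lemma itself, which only asserts \eqv(qm.12) for events of the form \eqv(quasi.1).
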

 
 \begin{proof} Let $\EE_0(0)=\{e_0(0)\leq r_0\}\cap \EE_1(00)\cap \EE_1(01)$.
%    Eq. \eqv(gen.2) gives also rise to a recursion that implies the quasi-Markov property. 
%    To see this, consider the probability 
%    \be\Eq(gen.3.4)
%      \wh P_{\l,t} \left( e_0(0)\leq t-r_0, e_{00}\leq t-r_{00},e_{01}\leq t-r_{01}|\right),
%      \ee
%      for $ 0\leq r_{00}, r_{01} \leq r_0$.
      We now have
      \be\Eq(part.1)
      n(t)= \tilde n^{(00)} (t-T_1(00))+
       \tilde n^{(01)} (t-T_1(00)).
        \ee
        where  the $\tilde n  $ are the particle numbers in the four respective branches of the tree. 
        In analogy to \eqv(gen.2), we obtain
        \bea
       && v_\s(t-T)\wh P_{\l,t-T}(\EE_0(0))\nonumber\\
       &&=\int_T^t\eee^{-s+T}  \E \left[\s(\l,e)^{n^{(00)}(t-s)+n^{(01)}(t-s)-1}
        \EE_1(00)\EE_1(01)\right]ds\nonumber\\
        &&=\int_T^t\eee^{-s+T} \s(\l,\e) \E \left[\s(\l,e)^{n^{(00)}(t-s)-1}       \EE_1(00)\right]
        \E\left[\s(\l,\e)^{n^{(01)}(t-s)-1}
 \EE_1(01)\right]ds\nonumber\\
    &&=\int_T^t\eee^{-s+T} \s(\l,\e) v_\s(t-s)^2\frac{\E \left[\s(\l,e)^{n^{(00)}(t-s)-1}       \EE_1(00)\right]}{v_\s(t-s)}\frac{
        \E\left[\s(\l,\e)^{n^{(01)}(t-s)-1}
 \EE_1(01)\right]}{v_\s(t-s)}ds\nonumber\\
    &&=\int_T^t\eee^{-s+T} \s(\l,\e) v_\s(t-s)^2
    \wh P_{\l,t-s}\left( \EE_1(\overleftarrow{00})\right)
       \wh P_{\l,t-s}\left( \EE_1(\overleftarrow{01})\right)ds,
 \eea
where we used  the independence the events in the two branches under the 
original BBM measure $\EE$ and the definition of $\wh P_{\l,t}$.
This concludes the simple proof.
        \end{proof}

  \section{The limit $\l(t)\downarrow 0$}
  We have seen that a penalty with fixed $\l<\infty$ and $\e>0$ enforces that only a finite number 
  of branchings take place, even if we let $t$ tend to infinity. To get more interesting results, we consider now the case when 
  $\l=\l(t) $ depends on $t$ such that $\l(t)\downarrow 0$ as $t\uparrow \infty$. In fact, we will see that a rather interesting limiting model arises in this setting. 
  Clearly, in this case
 $\s(\l(t),\e) \approx \eee^{-\l(t)\e^2}\approx 1-\l(t)\e^2$ is  a good approximation.

We first look at the partition function.
\begin{lemma}\TH(zero.1)
Assume that $\l(t)\downarrow 0$. Then 
\be\Eq(zero.0)
\lim_{t\uparrow\infty} \eee^t \l(t) \e^2   v_{\s(\l(t),\e)}(t) =1.
\ee
\end{lemma}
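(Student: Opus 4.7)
The plan is to reduce the statement to the explicit formula for $v_\sigma$ derived in Section 2 and to expand $\sigma(\lambda,\varepsilon)=\sech(\varepsilon\sqrt{2\lambda})$ for small $\lambda$. By \eqv(pf.10),
\be
v_\sigma(t)=\tilde v_\sigma(t)=\frac{\eee^{-t}}{(1-\sigma)+\sigma\eee^{-t}},
\ee
so that
\be
\eee^{t}\l(t)\varepsilon^{2}\, v_{\sigma(\l(t),\varepsilon)}(t)\;=\;\frac{\l(t)\varepsilon^{2}}{(1-\sigma(\l(t),\varepsilon))\;+\;\sigma(\l(t),\varepsilon)\,\eee^{-t}}.
\ee
This identity is the whole engine of the proof; once it is in place, the limit is a two-line asymptotic computation.

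Next I would establish the small-$\lambda$ asymptotics of the multiplier $\sigma$. Starting from \eqv(approx.3) and the Taylor series $\sech(x)=1-x^{2}/2+O(x^{4})$ at the origin, one obtains
\be
1-\sigma(\lambda,\varepsilon)\;=\;1-\sech(\varepsilon\sqrt{2\l})\;=\;\l\varepsilon^{2}+O(\l^{2}\varepsilon^{4})
\quad\text{as }\l\downarrow0,
\ee
and $\sigma(\l,\varepsilon)\to 1$. Substituting this into the denominator above yields
\be
\eee^{t}\l(t)\varepsilon^{2}\, v_{\sigma(\l(t),\varepsilon)}(t)\;=\;\frac{\l(t)\varepsilon^{2}}{\l(t)\varepsilon^{2}\bigl(1+O(\l(t)\varepsilon^{2})\bigr)\;+\;\sigma(\l(t),\varepsilon)\,\eee^{-t}}.
\ee

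Finally I would conclude by dividing numerator and denominator by $\l(t)\varepsilon^{2}$: the first summand in the denominator converges to $1$ by the above expansion, while the second summand is $\sigma(\l(t),\varepsilon)\,\eee^{-t}/(\l(t)\varepsilon^{2})$, which tends to $0$ under the implicit scaling regime $\l(t)\eee^{t}\to\infty$ in which this limiting analysis is interesting (otherwise $\eee^{-t}$ already dominates $\l(t)\varepsilon^{2}$ and the partition function is not of the claimed size). The main (and essentially only) subtle point is this balance: one must identify the correct regime $\eee^{-t}\ll\l(t)\varepsilon^{2}$ in which the $\sigma\eee^{-t}$ contribution is subleading, so that the asymptotic equivalence $v_{\sigma(\l(t),\varepsilon)}(t)\sim \eee^{-t}/(\l(t)\varepsilon^{2})$ truly holds. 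Everything else is a direct substitution into the explicit formula from Lemma~\thv(pf.2).
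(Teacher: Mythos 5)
Your proof is essentially the same as the paper's: both substitute the explicit formula \eqv(pf.10) and expand $1-\sigma(\l,\e)=\l\e^{2}+O(\l^{2}\e^{4})$ from the $\sech$ series. You correctly flag a gap that the paper's own one-line proof glosses over: writing the denominator as $\l(t)\e^{2}+O(\l(t)^{2})+O(\eee^{-t})$ and then asserting $=1+O(\l(t))$ tacitly requires $\eee^{-t}=o(\l(t)\e^{2})$, which does not follow from the stated hypothesis $\l(t)\downarrow 0$ alone (take $\l(t)=\eee^{-2t}$ to see the limit would then be $0$, not $1$); the missing condition is exactly the one spelled out in Theorem \thv(zero.4), namely $t+\ln(\l(t)\e^{2})\uparrow\infty$, and it should be imported into the statement of Lemma \thv(zero.1) as well.
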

\begin{proof}
We just use the explicit form of $v_\s(t)$ given in \eqv(pf.10). This gives
\bea\Eq(zero.3)
\eee^t \l(t) \e^2   v_{\l(t)}(t)&=&\frac{\l(t) \e^2}{1-\s(\l,\e) +\s(\l,\e)\eee^{-t}}\nonumber\\
&=&\frac{\l(t) \e^2}{\l(t)\e^2+O(\l(t)^2)) +O(\eee^{-t})}
=1+O(\l(t)),
\eea
which implies the statement of the lemma.
\end{proof}

From Theorem \thv(geo.1) we derive the asymptotics of the particle number.

\begin{theorem}\TH(zero.4)
Assume that $\l(t)\downarrow 0$, but $t+\ln (\l(t)\e^2)\uparrow \infty$, as $t\uparrow\infty$. Then: 
\begin{itemize} 
\item [(i)] The number of particles at time $t$ times $\l(t)\e^2$, $\l(t)\e^2n(t)$, converges in distribution to an exponential random variable with parameter $1$.
\item [(ii)] For any $\rho\in\R$, the number of particles at time $s(t)=t+\ln(\l(t)\e^2) +\rho$ converges in distribution to a geometric distribution with parameter $1/(1+\eee^\rho)$.
\item [(iii)] If $\rho(t)\uparrow\infty$ but $\ln(\l(t)\e^2) +\rho(t)\leq 0$ , the number of particles 
at time $s(t)=t+\ln(\l(t)\e^2) +\rho(t)$ divided by  $1+\eee^{\rho(t)}$ converges in distribution to an 
exponential random variable with parameter $1$.
\end{itemize}
\end{theorem}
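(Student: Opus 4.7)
My plan is to obtain all three statements from the explicit Laplace-transform formula \eqv(s.4), which gives $\wh E_{t,\s}[\eee^{\g n(s)}]$ as a rational function of $(\g,s,t,\s)$. For each regime, I substitute the appropriate $s=s(t)$ and rescale $\g$, then use the Taylor expansion $\s(\l(t),\e) = \sech(\e\sqrt{2\l(t)}) = 1-\l(t)\e^2 + O(\l(t)^2\e^4)$ and send $t\uparrow\infty$. The key observation used throughout is that the hypothesis $t+\ln(\l(t)\e^2)\uparrow\infty$, equivalently $\l(t)\e^2\eee^t\uparrow\infty$, ensures that the stray exponential contributions $\s\eee^{-t}$ are of order $o(\l(t)\e^2)$ and hence negligible compared to $1-\s$.

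For part (i) I will either invoke Theorem \thv(geo.1)(i) directly, since $n(t)$ is geometric with parameter $p_t=1-\s(1-\eee^{-t})$ satisfying $p_t/(\l(t)\e^2)\to 1$ by the observation above, after which the elementary fact that $p\,\mathrm{Geo}(p)\Rightarrow \mathrm{Exp}(1)$ as $p\downarrow 0$ closes the argument; or proceed directly by setting $s=t$ and replacing $\g$ by $-\g\l(t)\e^2$ in \eqv(s.t), so that numerator and denominator are each of order $\l(t)\e^2$ and their ratio tends to $1/(1+\g)$, the Laplace transform of $\mathrm{Exp}(1)$. Part (ii) I prove by substituting $s(t)=t+\ln(\l(t)\e^2)+\r$ into \eqv(s.4), which makes $\eee^{s(t)-t}=\eee^{-t+s(t)}=\l(t)\e^2\eee^{\r}$; the numerator reduces to $(1-\s)+\s\eee^{-t}\sim \l(t)\e^2$, and factoring $\l(t)\e^2$ out of the denominator yields, in the limit $t\uparrow\infty$,
\be\Eq(plan.1)
\wh E_{t,\s}\!\left[\eee^{\g n(s(t))}\right] \longrightarrow \frac{1}{\eee^{-\g}(1+\eee^{\r})-\eee^{\r}},
\ee
which is the moment generating function of the geometric law with parameter $(1+\eee^{\r})^{-1}$.

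For part (iii) I rerun the computation of part (ii) but with $\r=\r(t)\uparrow\infty$ and with $\g$ rescaled as $\g\mapsto -\g/(1+\eee^{\r(t)})$, so that one is computing the Laplace transform of $n(s(t))/(1+\eee^{\r(t)})$. The constraint $\ln(\l(t)\e^2)+\r(t)\leq 0$ keeps $s(t)\leq t$ so formula \eqv(s.4) remains valid. The main obstacle is the presence of two small parameters, $\l(t)\e^2$ and $(1+\eee^{\r(t)})^{-1}$, which must be decoupled: expanding $\eee^{-\g/(1+\eee^{\r(t)})}=1-\g(1+\eee^{\r(t)})^{-1}+O((1+\eee^{\r(t)})^{-2})$ inside the denominator of the analogue of \eqv(plan.1) and carefully tracking the cancellations produces, after simplification, the limit $1/(1+\g)$, identifying the limiting distribution as $\mathrm{Exp}(1)$.
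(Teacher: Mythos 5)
Your proposal is correct and follows essentially the paper's own route: the paper's proof is a one-line pointer to the explicit Laplace-transform formulas \eqv(s.1) and \eqv(s.5), and you simply carry out the limiting computations in detail, working from the parent formula \eqv(s.4) (or \eqv(s.t)) and using $\s(\l(t),\e)=1-\l(t)\e^2+O(\l(t)^2\e^4)$ together with the observation that $\eee^{-t}=o(\l(t)\e^2)$ under the hypothesis $t+\ln(\l(t)\e^2)\uparrow\infty$. Your bookkeeping of the two small parameters $\l(t)\e^2$ and $(1+\eee^{\rho(t)})^{-1}$ in part (iii) is the only place requiring any care, and your decoupling is sound; the alternative derivation of (i) via Theorem \thv(geo.1)(i) and the elementary fact $p\,\mathrm{Geo}(p)\Rightarrow\mathrm{Exp}(1)$ is equally valid and arguably cleaner.
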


\begin{proof}
The proof follows easily from the explicit computations of the Laplace tranforms of the particle numbers, see Eq. \eqv(s.1) and \eqv(s.5). 
\end{proof}

The next theorem gives the asymtotics of the first branching time.

\begin{theorem}\TH(zero.5) Let $\l(t)$ be as in Theorem \thv(zero.4). Then, for any $\rho\in \R$, 
\be\Eq(gen.8)
\lim_{t\uparrow\infty} \wh P_{\l(t),t} \left(\t_1\leq t+\ln(\l(t)\e^2)+\rho\right) 
%\approx  \frac {1-(\l(t)\e^2)^{-1}\eee^{-t}\eee^{-\rho}}{\eee^{-\rho}+1}
=
 \frac 1{\eee^{-\rho}+1}.
 \ee
 \end{theorem}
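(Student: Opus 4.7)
The plan is to apply Theorem \thv(branch.1) directly with $r=r(t):=-\ln(\l(t)\e^2)-\rho$, so that $t-r=t+\ln(\l(t)\e^2)+\rho$ is precisely the time appearing in the statement, and to expand each factor of the exact formula
\be\Eq(plan.1)
\wh P_{\l(t),t}\!\left(\t_1\leq t-r\right)=\s(\l(t),\e)\,\frac{\eee^{-r}-\eee^{-t}}{1-\s(\l(t),\e)(1-\eee^{-r})}
\ee
to leading order in the small parameter $\l(t)\e^2$. First note that by our choice of $r$ one has $\eee^{-r}=\l(t)\e^2\,\eee^{\rho}$, which is $o(1)$ since $\l(t)\downarrow 0$.

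Next I would collect the three small-parameter asymptotics needed. By \eqv(approx.3), $\s(\l(t),\e)=\sech(\e\sqrt{2\l(t)})=1-\l(t)\e^2+O(\l(t)^2\e^4)$, so $\s(\l(t),\e)\to 1$ and $1-\s(\l(t),\e)=\l(t)\e^2(1+o(1))$. For the numerator in \eqv(plan.1), the assumption $t+\ln(\l(t)\e^2)\uparrow\infty$ is exactly what forces $\eee^{-t}/(\l(t)\e^2)\to 0$, hence
\be\Eq(plan.2)
\eee^{-r}-\eee^{-t}=\l(t)\e^2\,\eee^{\rho}\bigl(1+o(1)\bigr).
\ee
For the denominator, writing $1-\s(\l(t),\e)(1-\eee^{-r})=\bigl(1-\s(\l(t),\e)\bigr)+\s(\l(t),\e)\eee^{-r}$, the two pieces give $\l(t)\e^2(1+o(1))$ and $\l(t)\e^2\eee^{\rho}(1+o(1))$ respectively, so
\be\Eq(plan.3)
1-\s(\l(t),\e)(1-\eee^{-r})=\l(t)\e^2\bigl(1+\eee^{\rho}\bigr)\bigl(1+o(1)\bigr).
\ee

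Dividing \eqv(plan.2) by \eqv(plan.3) and using $\s(\l(t),\e)\to 1$, the ratio in \eqv(plan.1) tends to $\eee^{\rho}/(1+\eee^{\rho})=1/(\eee^{-\rho}+1)$, which is the claim. There is no real obstacle here; the only point that requires care is to verify that the hypothesis $t+\ln(\l(t)\e^2)\uparrow\infty$ is precisely the hypothesis needed to guarantee that the subtracted term $\eee^{-t}$ is negligible relative to the leading $\l(t)\e^2\eee^{\rho}$ in the numerator, so that one really is in the regime where the branching time is asymptotically of the stated form.
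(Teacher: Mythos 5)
Your proposal is correct and follows essentially the same route as the paper: both start from the exact formula of Theorem \thv(branch.1), substitute $r=-\ln(\l(t)\e^2)-\rho$, and expand to leading order in $\l(t)\e^2$ using $\s(\l,\e)=1-\l\e^2+O(\l^2\e^4)$. You spell out somewhat more explicitly than the paper exactly where the hypothesis $t+\ln(\l(t)\e^2)\uparrow\infty$ enters (namely to make $\eee^{-t}$ negligible against $\l(t)\e^2\eee^{\rho}$ in the numerator), which is a useful clarification but not a different argument.
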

\begin{proof}
From the explicit formula \eqv(branch.2),  we get that, 
 \be\Eq(gen.9)
 \wh P_{\l(t),t} \left(\t_1\leq t-r\right)=
 \frac{(1-\l\e^2)(\eee^{-r}-\eee^{-t})}{\l\e^2+\eee^{-r}(1-\l\e^2)}(1+O(\l^2\e^4))
 = \frac {1-\eee^{-(t-r)}}{\eee^r\l(t)\e^2+1}(1+O(\l^2\e^4)).
 \ee
 To get something non-trivial, the first term in the denominator should be of order one.
 That suggest to choose $r=r(t)=-\ln(\l(t)\e^2) -\rho$. 
 Eq. \eqv(gen.8) then follows directly.
 \end{proof}
 
 \subsection {The limiting Quasi-Markov Galton-Watson tree}
 Theorem \thv(zero.5) 
  also suggests to define 
 \be\Eq(yes.1)
 \tilde \t_1\equiv \t_1-t-\ln (\l(t)\e^2).
 \ee
 $\tilde\t_1$ should  be thought of as the position of the first branching seen from the standard position 
 $t +\ln(\l(t)\e^2)$. 
% This result shows that the first branching occurs a time $-\ln(\l(t)\e^2)+ O(1)$ before the final time $t$.
% For the remaining time, there are two BBMs running for a time  $-\ln(\l(t)\e^2)+ O(1)$ 
% subject to penalty $\l(t)\e^2$ per branching. Since the expected  number of branching in the remaining 
% time is only $\exp( -\ln(\l(t)\e^2)+ O(1))=O(1) /(\l(t) \e^2)$, the total penalty for branching is only of order one, so the two processes will be essentially normal BBMs. 

 To derive the asymptotics of  the consecutive branching times, we just have to 
 look at 
  \be\Eq(two.1)
  \wh P_{\l(t),t} (e_u(n+1)\leq \D|\FF_n)=
  \wh P_{\l(t), t-T_u(n)}(\t_1\leq \D).
%    \wh P_{\l(t),t-} \left(\t_2\leq t-r\right)  =
%   \wh P_{\l(t), t-\t_1} \left( \t_1^{(1)}\leq t-\t_1-r\right) .
    \ee
    For this we have from the previous computation
    \be\Eq(two.1.1)
    \wh P_{\l(t), t-T_u(n)} \left( \t_1\leq \D\right) =\frac{1-\eee^{-\D}}{\eee^{t-T_u(n)+\ln(\l(\e^2)} \eee^{-\D} +1}.
    \ee
    Recall that, e.g. $T_u(1)=e_u(1)= t+\ln(\l\e^2)+\rho$,  where $\rho$ is a finite random variable,
    so that we will have that,  in general, $ T_u(n)=e_u(1)= t+\ln(\l\e^2)+O(1)$, so that the right-hand side of 
    \eqv(two.1.1) will converge to some non-trivial distribution.

The asymptotic results above suggest to consider the branching times of the process in the limit 
$t\uparrow \infty$, $\l(t)\downarrow 0$, around the  time $t-\ln (\l(t)\e^2)$. 
We have seen that the time of the first branching shifted by this value converges in distribution 
 to a random variable with distribution 
function $1/\left(\eee^{-\rho}+1\right)$ (which is supported on $(-\infty, \infty)$).

This suggests to define a limiting model as a quasi-Markov Galton-Watson tree with  the
measures 
\be\Eq(qml.1)
q_{\infty,T}(e\leq \D)=\frac {1-\eee^{-\D}}{\eee^{-\D-T}+1}.
\ee
This gives, in particular, for the first branching time,
\be
Q_{\infty, t_0}(e_0(0)\leq \D)=\frac {1-\eee^{-\D}}{\eee^{-\D-t_0}+1}.
\ee
We have to choose $t_0$ to match this with the known asymptotics of 
the first branching time, see \eqv(gen.8). It turns out that
\be\Eq(qml.3)
\lim_{t_0\downarrow -\infty}  Q_{\infty, t_0}(e_0(0)\leq -t_0+\D)
=\lim_{t_0\downarrow -\infty} 
\frac {1-\eee^{t_0 -\D}}{ \eee^{-\D}+1}=\frac 1{1+\eee^{-\D}},
\ee
for all $\D\in \R$. 
So the picture is that we start the process at $t_0=-\infty$ and the first branching time is infinitely far in the future and occurs at a finite random time distributed according to \eqv(qml.3). 
%conditional distributions of the branching times are given as
%\be\Eq(law.1)
%\P\left(e_{u}(n+1)\leq\D|\FF_n\right)=\begin{cases}\frac{1-\eee^{-\D}}{\eee^{-\D-T_u(n)}+1},&\; \text{if} \; \D\geq 0,\\
%0,&\; \text{otherwise}.
%\end{cases}
%\ee
%Here $\FF_n$ denotes the $\s$-algebra generated by the $e_u(k)$, $k\leq n$. 
%Moreover, the distribution of $e(0)$ is given as 
%\be\Eq(law.2)
%\P\left(e_u(0)\leq\D\right)= \frac{1}{\eee^{-\D}+1}.
%\ee
The density of this distribution is $\frac 14 \cosh(\D)^{-2}$. In particular, it has mean zero and variance $\pi^2/3$.
%The construction can be generalised by allowing for different laws that may depend in a more complicated way 
%on the past. 

We have the following result.
\begin{theorem}\TH(limit-th.1)
Assume that $\l(t)\downarrow 0$ and  $t+\ln(\l(t)\e^2)\uparrow\infty$, as $T\uparrow\infty$. Then, for any $\D\in \R$ and events $\EE_1(u)\in \GG_1$,
\bea\Eq(limit-th.2)
&&\lim_{t\uparrow\infty}
\wh P_{\l(t),t} \left(\{e_0(0)\leq \D+t+\ln(\l(t)\e^2)\}\cap \EE_1((0,0))\cap \EE_1((0,1))
%e_u(n)\leq \D_u(n), u\in \mathbf{T}
\right)\nonumber\\
&&= Q_{\infty,-\infty} \left(\{e_0(0)\leq \D\}\cap \EE_1((0,0))\cap \EE_1((0,1))\right),
\eea
where $Q_{\infty,-\infty}$ is the law of the limiting  model.
\end{theorem}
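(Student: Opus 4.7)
The plan is to proceed by induction on the horizon $N$ of the normal event $\EE_0(0)=\{e_0(0)\leq r\}\cap \EE_1((0,0))\cap \EE_1((0,1))$, using the quasi-Markov recursion from Lemma \thv(qm.10). To make the induction close cleanly I would actually prove the slightly stronger statement: for every $T\in \R\cup\{-\infty\}$ and every sequence $\tilde t(t)\uparrow\infty$ with $\tilde t(t)+\log(\l(t)\e^2)\to -T$, the family of measures $\wh P_{\l(t),\tilde t(t)}$ converges on finite-horizon normal events to $Q_{\infty,T}$. The theorem itself is the special case $\tilde t(t)=t$, $T=-\infty$.

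The base case $N=0$ is the first branching marginal. Rewriting \eqv(gen.9) in terms of $u=\tilde t(t)-r$ and letting $t\to\infty$ gives
\be
\wh P_{\l(t),\tilde t(t)}(\t_1\leq u)=\frac{(1-\eee^{-u})(1+o(1))}{\eee^{-u}\eee^{\tilde t(t)+\log(\l(t)\e^2)}+1}\longrightarrow \frac{1-\eee^{-u}}{\eee^{-u-T}+1},
\ee
which is exactly $Q_{\infty,T}(e_0(0)\leq u)$ by \eqv(qml.1). For the inductive step, assuming the claim for all events of horizon $\leq N$, I would apply the recursion \eqv(qm.12) and change variables $u=s-\tilde t(t)-\log(\l(t)\e^2)$. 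The subtree time parameter then satisfies $(\tilde t(t)-s)+\log(\l(t)\e^2)=-u$, so the induction hypothesis applies and yields $\wh P_{\l(t),\tilde t(t)-s}(\EE_1(\overleftarrow{0j}))\to Q_{\infty,u}(\EE_1(\overleftarrow{0j}))$ for $j\in\{0,1\}$. A direct computation using the closed form \eqv(pf.10) of $v_\s$ together with $\s(\l,\e)=1-\l\e^2+O(\l^2\e^4)$ shows that the density in $u$ of the shifted first branching time converges pointwise to the density of $q_{\infty,T}$. Dominated convergence then identifies the limit with the integral given by the defining recursion \eqv(quasi.2) for $Q_{\infty,T}$ applied to $\EE_0(0)$.

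The main technical obstacle is the $t$-uniform integrable domination needed to invoke dominated convergence. The transformed density behaves like $\frac{\eee^{-u}(1+\eee^{-T})}{(\eee^{-u-T}+1)^2}$ up to a multiplicative $(1+o(1))$ factor, and one must check that this envelope is genuinely integrable on $\R$ and that the error is also integrably dominated, uniformly in $t$; both follow from Lemma \thv(zero.1), which keeps $\l(t)\e^2 v_\s(\tilde t(t))$ bounded away from $0$ and $\infty$ and thus forces exponential tails in $|u|$ at both ends. Once convergence is established on all finite-horizon normal events, Dynkin's theorem (as invoked in Lemma \thv(qm.2)) extends it to the $\s$-algebra they generate, completing the proof.
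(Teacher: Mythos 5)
Your proposal is correct and is in spirit the same as the paper's, but with the details that the paper elides actually filled in. The paper's own proof is a single sentence: it appeals to the computation showing that the conditional kernels $q_{t,T}$ converge, and implicitly to Lemma~\thv(qm.2), which says that the $Q$-measures on finite-horizon normal events are determined by the $q$-family. Passing from pointwise convergence of the kernels $q$ to convergence of the finite-dimensional marginals $Q$ is not automatic, and this is precisely the gap you close.

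What you do that the paper does not write out: (1)~you phrase the claim as an induction on the horizon $N$ of normal events, which is the right structure given Definition~\thv(normal.1) and Lemma~\thv(qm.2); (2)~you strengthen the induction hypothesis to allow the centering $\tilde t(t)+\ln(\l(t)\e^2)\to -T$ for any $T\in\R\cup\{-\infty\}$, which is necessary because after conditioning on the first branching at a time near $t+\ln(\l(t)\e^2)$ the subtrees have remaining horizon $t-s$ whose recentered parameter lands at a \emph{finite} $-u$ rather than at $-\infty$, so the pure $T=-\infty$ statement would not close under the recursion~\eqv(qm.12); (3)~you identify the dominated-convergence step as the real technical content, check that the limiting density $\eee^{-u}(1+\eee^{-T})/(\eee^{-u-T}+1)^2$ is integrable with exponential tails, and observe that Lemma~\thv(zero.1) supplies the uniform-in-$t$ control of $\l(t)\e^2 v_\s$ that makes a genuine dominating envelope available; (4)~you then invoke Dynkin's theorem exactly as Lemma~\thv(qm.2) does. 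This is not a different route from the paper, it is the route the paper points at, written as a proof. One small remark: for the base case with $T=-\infty$ you should be explicit that the threshold $u$ itself depends on $t$ (it is $\D+t+\ln(\l(t)\e^2)\to\infty$), so the claimed limit is $\frac{1}{1+\eee^{-\D}}$ rather than the degenerate value obtained by formally setting $T=-\infty$ in the fixed-$u$ formula; this is what~\eqv(qml.3) encodes and is exactly how the theorem's displayed limit should be read.
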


\begin{proof} All we need to show is that the measures $q$ converge. But this follows form the computations indicated above.
\end{proof}

\begin{figure} \label{fig.2}
\begin{center}
\includegraphics[width=8cm]{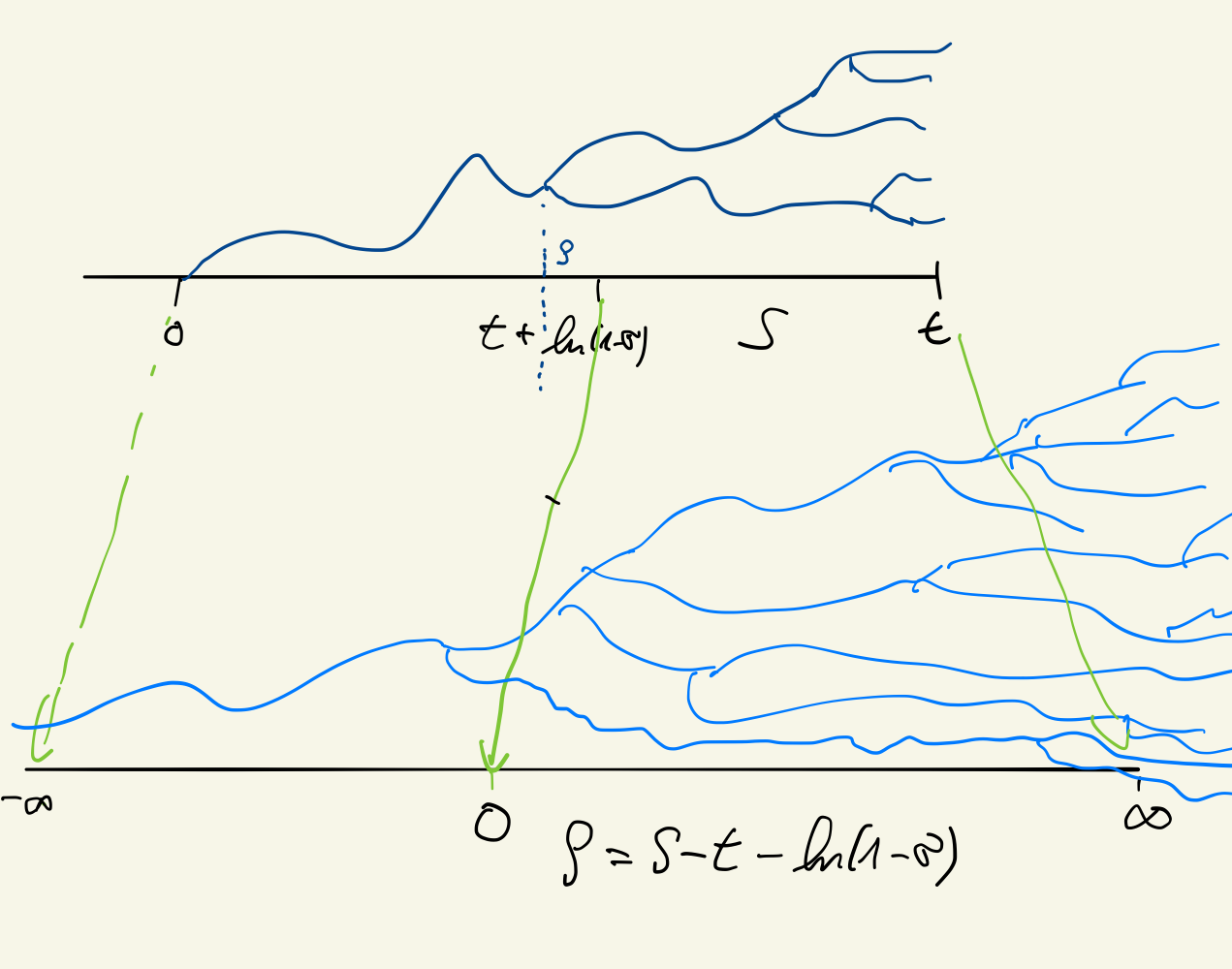}
\caption{Scaling towards the limit process}
\end{center}
\end{figure}
Note further, as long as $T_u(n)$ is negative, the distribution of $e_u(n)$ is concentrated around
$T_u(n)$, while as $T_u(n)$ become positive and large, the distribution tends to a standard exponential distribution.
In fact, 
\be\Eq(law.4)
\E\left[e_u(n+1)|\FF_n\right] = \left(1+\eee^{+T_u(n)}\right)\ln\left(1+\eee^{-T_u(n)}\right).
\ee
Clearly this converges to $1$, as $T_u(n)\uparrow\infty$ and behaves like $-T_u(n)$, as $T_u(n)$ tends to $-\infty$.

%
%  We introduce for $u(t)\in\ttt(t)$ the notation  
% $u(t)|_{s}\in\ttt(s)$ for the multi-index where the first $W(s)$ indices coincide with those of $u(t)$ 
% and 
% the rest are zeros. Note that this labelling is consistent in the sense that if $u(t)$ is the label of a 
% particle at time $t$, then $u(t)|_s=u(s)$ is the label of the ancestor of this particle at time $s$. 
% Hence we may think of $u(\infty)\in \ttt(\infty)$ as a particle at "infinity" and of $(u(t), t\in\R_+)$ as the 
% trajectory of a particle in the space of labels. 
% Knowledge of all multi indices $u\in \ttt(\infty)$ and of all branching times allows to 
% reconstruct the entire infinite tree.
% For two particle labelled by $u$ and $v$,  is the then simply
% $d(u,v)=\sup \{t\geq 0: u(t)=v(t)\}$. 
% 
 
%\begin{remark}
%The labelling of the GW-tree is a slight variant of the familiar Ulam-Neveu-Harris labelling 
%(see e.g. \cite{HaHa06}). In our labelling the added zeros keep track of the order in which 
%branching occurred in continuous time.
%We believe that this or an equivalent construction must be standard, but we have not been able to 
%find it for continuous time trees in the literature.
%\end{remark}

\section{The distribution of the front}

An obvious first question is the distribution of the maximum of BBM under the law $\wh P_{\l,t}$.
We define, for any $z\in \R$, 
\be
\Eq(max.1)
u_\s (t,z)=\E\left[\s(\l,\e)^{n(t)-1}\1_{\forall_{1=1}^{n(t)} x_i(t)\leq z}\right].
\ee
Then 
\be\Eq(prob.1)
\wh P_{\l,t}\left(\forall_{1=1}^{n(t)} x_i(t)\leq z\right)=\frac{u_\s (t,x)}{v_\s(t)}\equiv 1-w_\s(t,x).
\ee
Note that we use the  choice $1-w_\s$ to be closer to the usual formulation of the F-KPP equation.

Interestingly, $w_\s$ solves a time-dependent version of the F-KPP equation.
\begin{lemma}\TH(FKPP.0)
$w_\s$ defined in \eqv(prob.1) is the unique solution of the equation
\be\Eq(better.1)
\del_t w_\s %=\frac{\del_t u_\s }{v_\s}-\frac {u_\s \del_t v_\s}{v_\s^2}
%&=& \frac 12\del_{xx} w_\s -w_\s +\s w_\s  u_\s  +w_\s - \s w_\s  v_\s
%=\frac 12\del_{xx} w_\s  +\s w_\s (w_\s -1)v_\s\nonumber\\.
=\frac 12\del_{xx} w_\s  + w_\s (1-w_\s) \frac {\s}{(1-\s)\eee^{t}+\s}.
\ee
with initial condition $w_\s(0,x)=\1_{x\leq 0}$.
\end{lemma}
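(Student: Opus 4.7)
The plan is to derive the PDE first for the unnormalised quantity $u_\s(t,z)$ and then transfer to $w_\s=1-u_\s/v_\s$ using the explicit formula for the partition function. First I would condition on the time $\t_1$ of the first branching event, which is $\mathrm{Exp}(1)$, and on the displacement $B_{\t_1}$ of the ancestor. After a branching at time $s<t$, the two subtrees evolve as independent copies of the process shifted in space by $B_s$, and the weight factorises via
\be
\s^{n(t)-1}=\s^{(n^{(1)}(t-s)-1)+(n^{(2)}(t-s)-1)+1}=\s\cdot\s^{n^{(1)}(t-s)-1}\s^{n^{(2)}(t-s)-1}.
\ee
This yields the renewal-type integral equation
\be
u_\s(t,z)=e^{-t}\P(B_t\leq z)+\s\int_0^t e^{-s}\E\left[u_\s(t-s,z-B_s)^2\right]ds,
\ee
with initial datum $u_\s(0,z)=\1_{z\geq 0}$.

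Next I would multiply by $e^t$, change variables $r=t-s$, and rewrite using the heat semigroup $(P_tf)(z)=\E[f(z-B_t)]$. Differentiating in $t$ and using $\del_t P_t=\tfrac12\Delta P_t$ (the standard McKean argument) converts the integral equation into the semilinear PDE
\be
\del_t u_\s=\tfrac12\del_{xx} u_\s+\s u_\s^2-u_\s.
\ee

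The final step is algebraic. Substituting $u_\s(t,z)=v_\s(t)(1-w_\s(t,z))$ and using Lemma \thv(pf.2), namely $v_\s'=\s v_\s^2-v_\s$, the linear terms and one of the quadratic terms cancel after expansion. Dividing by $v_\s(t)$ one obtains
\be
\del_t w_\s=\tfrac12\del_{xx}w_\s+\s v_\s(t)\,w_\s(1-w_\s),
\ee
and plugging in the explicit form $v_\s(t)=((1-\s)e^t+\s)^{-1}$ from \eqv(pf.10) produces exactly the reaction coefficient $\s/((1-\s)e^t+\s)$ in \eqv(better.1). The initial condition $w_\s(0,x)=\1_{x\leq 0}$ follows from $u_\s(0,\cdot)=\1_{[0,\infty)}$ and $v_\s(0)=1$ (up to a null set). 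Uniqueness is standard: the reaction $w\mapsto w(1-w)$ is Lipschitz, the factor $\s v_\s(t)$ is bounded on $[0,\infty)$, and a maximum principle applied to both $w_\s$ and $1-w_\s$ keeps solutions in $[0,1]$.

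The substantive content is thus in deriving the PDE for $u_\s$ by McKean's argument; the main obstacle is merely the bookkeeping in the substitution $u_\s=v_\s(1-w_\s)$, but the ODE for $v_\s$ from Lemma \thv(pf.2) has been engineered precisely so that the $-u_\s$ term and a piece of $\s u_\s^2$ cancel, leaving a pure reaction-diffusion equation for $w_\s$ with a time-dependent coefficient encoding the diminishing influence of the repulsion as the process progresses.
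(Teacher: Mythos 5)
Your proposal is correct and takes essentially the same route as the paper: derive the renewal integral equation for the unnormalised $u_\s$ by conditioning on the first branching time, convert it to the semilinear PDE $\del_t u_\s=\tfrac12\del_{xx}u_\s-u_\s+\s u_\s^2$ via the McKean/Green-function argument, and then perform the substitution $u_\s=v_\s(1-w_\s)$ using $v_\s'=\s v_\s^2-v_\s$ and the explicit form of $v_\s$ from \eqv(pf.10). The only addition you make beyond the paper is a brief sketch of the uniqueness argument, which the paper asserts without proof.
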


\begin{proof}
In complete analogy to the derivation of the F-KPP equation (see, e.g. \cite{bbm-book}), $u_\s $ satisfies the recursive equation
\be
\Eq(max.2)
u_\s (t,z)=\eee^{-t} \Phi_t(z)+\int_0^t ds \eee^{-(t-s)} \int dy\frac {\eee^{-\frac{y^2}{2(t-s|)}}}{\sqrt{2\pi(t-s)}}\s(\l,\e)
u_\s (s,z-y)^2,
\ee
where $\Phi_t(z)=\int_{-\infty}^z \frac {\eee^{-\frac {x^2}{2t}}}{\sqrt {2\pi t}}dx$ is the probability that 
a single Brownian motion at time $t$ is smaller than $z$.
Letting $H(t,x)\equiv\frac{ \eee^{-t -\frac{x^2}{2t}}}{\sqrt {2\pi t}}$,
we can write this as
\be
\Eq(max.3)
u_\s (t,z)= \int_{-\infty}^\infty dy H(t, z-y)\1_{y\leq 0} +\int_0^t ds \int dy H(s,y) \s(\l,\e)u_\s (t-s,z-y)^2.
\ee
Note that $H$ is the Green function for the differential operator $\del_t-\frac 12 \del_{xx}+1$.
and so $u_\s $ is the mild formulation of the partial differential equation 
\be\Eq(pde.1)
\del_t u_\s (t,z)=\frac 12 \del_{zz} u_\s (t,z)-u_\s (t,z)+\s(\l,\e) u_{\l}(t,z)^2,
\ee
with initial condition $u_\s (0,z)=\1_{z\geq 0}$.
This equation is the F-KPP equation if $\s(\l,\e)=1$, i.e. if $\l=0$, and looks similar to it in general.
%In fact, one can re-interpret this equation as a standard F-KPP equation with initial condition 
%$\s\1_{z\geq 0}$ (by simply setting $u=v/\s$). 
 Hence, % $w_\s (t,x)\equiv \frac{u_\s (t,x)}{v_\s(t)}$, we see that
\bea\Eq(better.1.1)
\del_t w_\s &=&-\frac{\del_t u_\s }{v_\s}+\frac {u_\s \del_t v_\s}{v_\s^2}
%&=& \frac 12\del_{xx} w_\s -w_\s +\s w_\s  u_\s  +w_\s - \s w_\s  v_\s
=\frac 12\del_{xx} w_\s  +\s w_\s (1-w_\s )v_\s\nonumber\\
&=&\frac 12\del_{xx} w_\s  + w_\s (1-w_\s ) \frac {\s}{(1-\s)\eee^{t}+\s},
\eea
where we used the explicit form of $v_\s$ from \eqv (pf.10).
\end{proof} 

Note that \eqv(better.1) is a time-dependent version of the F-KPP equation, where the non-linear term is modulated down over time. 
Time dependent F-KPP equations have been studied in the past, see, e.g. \cite{Rossi2014,Ryzhik2013,Ryzhik2015}, but we did not find 
this specific example in the literature.
For small $\l$, \eqv(better.1) becomes
\be\Eq(better.2)
\del_t w_\s 
%&=& \frac 12\del_{xx} w_\s -w_\s +\s w_\s  u_\s  +w_\s - \s w_\s  v_\s\nonumber\\
=\frac 12\del_{xx} w_\s  + w_\s (1-w_\s )\frac {1+O(\l\e^2)} {\l\e^2\eee^t+1}.
\ee

For future use, note that \eqv(better.2) is a special case of a class of F-KPP equations of the form 
\be\Eq(general.1)
\del_t \psi=\frac 12\del_{xx} \psi + g(t)\psi(1-\psi),
\ee
where $g:\R\rightarrow \R_+$. We will be interested in the case when $g$ is bounded, monotone decreasing, and integrable.
%We note the following facts about these equations.
%
%\begin{lemma}\TH(fkpp.1)
%Let $u_i, i=1,2$ be solutions of the equations
%\be
%\del_t u_i=\frac 12\del_{xx} u_i +a_iu_i(1-u_i),
%\ee
%with initial condition $u_i(0,x)=v(x)$, and assume that $0\leq a_1\leq a_2<\infty$. Then
%\be\TH(maxpri.1)
%u_1(t,x)\leq u_2(t,x),\quad \forall t\geq 0, x\in \R.
%\ee
%\end{lemma}
%\begin{proof}
%The lemma follows immediately form the  maximum principle, see Proposition  3.1 in \cite{B_C}.
%\end{proof}
%
%The second lemma states that  the solutions to \eqv(maxpri.1) can be expressed through solutions to the standard F-KPP equation.
%\begin{lemma}\TH(fkpp.2) Fix $a\in \R_+$.
%Let $u_a$ be solution of the equations
%\be\Eq(ba.1)
%\del_t u_a=\frac 12\del_{xx} u_a +au_a(1-u_a),
%\ee
%with initial condition $u_a(0,x)=v(\sqrt ax)$.
%Then $u_a(t,x)=u_1( at,\sqrt ax)$, where $u_1$ is the solution of the standard F-KPP equation with initial condition 
%$\nu(x)$.
%\end{lemma}
%
%\begin{proof}
%Define $u_a(t,x)=u_1( at,\sqrt ax)$.
%Note first that $u_a(0,x) =u_1(0,\sqrt ax) =v(\sqrt ax)$.
%Next, 
%\bea\del_tu_a(t,x)=
%\del_t u_1(at,\sqrt ax)= % a (\del_tu_1)(\sqrt at,ax)
%&=&a \frac 12(\del_{xx}  u_1)(\sqrt at,at) + a u_a(1-u_a)\nonumber\\
%&=& \frac 12 \del_{xx} u_a(t,x)+a u_a(1-u_a),
%\eea
%so  it solves \eqv(ba.1).
%\end{proof}

The key tool for analysing solutions of \eqv(better.1) is  the Feynman-Kac representation for $\psi$, see Bramson \cite{B_C}.
\begin{lemma} \TH(feyn.1)
If $\psi$ is a solution of the equation \eqv(general.1) with initial condition $\psi(0,x)=\rho(x)$,
then $\psi$ satisfies 
\be\Eq(feyn.2)
\psi(t,x)=\E_x\left[\exp\left(\int_0^t g(t-s)(1-\psi(t-s,B_s))ds\right) \rho(B_t)\right],
\ee
where $B$ is Brownian motion starting in $x$.
\end{lemma}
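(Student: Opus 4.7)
The plan is the standard Feynman--Kac argument adapted to the time-dependent nonlinearity: rewrite the PDE as a linear heat equation with a random, time-dependent ``potential'' $g(t-s)(1-\psi(t-s,\cdot))$ evaluated along a Brownian trajectory, then apply It\^o's formula to exhibit an explicit martingale whose endpoints give the claimed identity.

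Fix $t>0$ and $x\in\R$, let $B$ be a Brownian motion started at $x$, and set
\be
A_s \equiv \int_0^s g(t-u)\bigl(1-\psi(t-u,B_u)\bigr)\,du, \qquad
M_s \equiv \psi(t-s,B_s)\,\eee^{A_s}, \qquad 0\leq s\leq t.
\ee
The two endpoints of $M$ are immediate: $M_0=\psi(t,x)$, while
\be
M_t = \rho(B_t)\,\exp\!\left(\int_0^t g(t-s)(1-\psi(t-s,B_s))\,ds\right).
\ee
Hence, if I can show that $M_s$ is a genuine martingale on $[0,t]$, then $\E_x[M_0]=\E_x[M_t]$ is exactly \eqv(feyn.2).

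The first step is to show $M$ is a local martingale by It\^o's formula. Writing $h(s,y)\equiv\psi(t-s,y)$ and using equation \eqv(general.1) in the form $\partial_t \psi=\tfrac12\partial_{xx}\psi+g(t)\psi(1-\psi)$, I get
\be
dh(s,B_s) \;=\; \bigl(-\partial_t\psi+\tfrac12\partial_{xx}\psi\bigr)(t-s,B_s)\,ds + \partial_x\psi(t-s,B_s)\,dB_s
\;=\; -g(t-s)\psi(1-\psi)\,ds + \partial_x\psi\,dB_s.
\ee
Since $A_s$ is of bounded variation, the product rule gives
\be
dM_s = \eee^{A_s}\,dh(s,B_s)+\eee^{A_s}h\,dA_s = \eee^{A_s}\,\partial_x\psi(t-s,B_s)\,dB_s,
\ee
where the two drift contributions cancel exactly because of the PDE. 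Thus $M$ is a local martingale.

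The second step, which I expect to be the main obstacle, is upgrading local to true martingale. Because $\psi$ arises as a probability (via \eqv(prob.1)), the comparison principle for \eqv(general.1) gives $0\leq \psi\leq 1$, so $\eee^{A_s}\leq \eee^{\|g\|_\infty t}$ is bounded uniformly in $s$; this takes care of the exponential factor. What remains is controlling $\partial_x\psi$ so that the stochastic integral has an integrable quadratic variation on $[0,t]$. Standard parabolic regularity for \eqv(general.1) with bounded coefficients and bounded initial data yields an a priori bound $|\partial_x\psi(r,y)|\leq C(r^{-1/2}\vee 1)$ uniform in $y$, which is square-integrable on $[0,t)$ against $ds$; this, together with the boundedness of $\eee^{A_s}$, shows that $\E_x\!\int_0^t \eee^{2A_s}(\partial_x\psi(t-s,B_s))^2\,ds<\infty$, so $M$ is a true (in fact $L^2$) martingale. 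Taking expectations of the identity $M_0=\E_x[M_t\mid \FF_0]$ then produces \eqv(feyn.2). (An alternative, if one prefers to avoid the gradient estimate, is to localize by a sequence of stopping times $\tau_n$ on which $\partial_x\psi$ is bounded, apply optional stopping, and pass to the limit using dominated convergence — the bounded exponential factor and bounded $\rho$ make this routine.)
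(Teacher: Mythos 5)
The paper gives no proof of this lemma, citing Bramson \cite{B_C} instead, so there is no in-paper argument to compare against; I will assess your proof on its own terms. The It\^o/martingale derivation is a standard and correct route to the Feynman--Kac formula for this kind of semilinear equation: your local-martingale computation is right, including the cancellation of the drift by the PDE, and the observation that $0\leq\psi\leq1$ bounds the exponential factor uniformly by $\eee^{\|g\|_\infty t}$.

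The one false step is the square-integrability claim used to promote the local martingale to a true one. The parabolic gradient estimate you invoke, $|\partial_x\psi(r,y)|\leq C(r^{-1/2}\vee 1)$, gives
\[
\E_x\!\int_0^t \eee^{2A_s}\bigl(\partial_x\psi(t-s,B_s)\bigr)^2\,ds \;\leq\; C'\int_0^t \bigl((t-s)^{-1}\vee 1\bigr)\,ds \;=\;\infty,
\]
so it does \emph{not} yield an integrable quadratic variation: the singularity at $s\to t$ (i.e., $r=t-s\to0$) is non-integrable after squaring. This matters precisely because the relevant initial data $\rho$ here is the Heaviside function, so $\partial_x\psi$ really does blow up near $r=0$. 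Your parenthetical localization remark is the correct repair, not merely an alternative: stop at $\tau_n=(t-1/n)\wedge\inf\{s:|B_s-x|\geq n\}$, where interior parabolic smoothing (for $r\geq1/n$) makes $\partial_x\psi$ bounded, apply optional stopping to get $\E_x[M_{\tau_n}]=\psi(t,x)$, and pass $n\to\infty$ using the uniform bound $0\leq M_s\leq\eee^{\|g\|_\infty t}$ together with the a.s.\ convergence $\psi(t-\tau_n,B_{\tau_n})\to\rho(B_t)$, which for Heaviside $\rho$ holds off the Lebesgue-null set $\{B_t=0\}$. With that replacement the argument is complete; the first ``$L^2$-martingale'' route should be dropped.
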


The  strategy to exploit this representation used by Bramson is to use a priori bounds on $\psi$ in the right-hand side of the equation
in order to get sharp upper and lower bounds. Here we want to do the same, but we need to take into account the specifics
 of the function $g$. Going back to the specific case \eqv(better.2), $g$ remains close to $1$ for a fairly long time ($O(-\ln (\l\e^2))$), 
 and then decays exponentially with rate $1$ to zero. Therefore, we expect that initially, the solution will behave like that of the F-KPP equation and approach a travelling wave solution. As time goes on, the wave slows down and comes essentially to a halt. Finally, as time goes on we see a pure diffusion. 
 We will deal differently with these three regimes. 
 We begin with the initial phase when $g(t)\sim 1$.

\begin{lemma} \TH(constant.1)
Assume that $g$ is non-increasing and bounded by one and zero from above and below. 
Define $G(t)=\int_0^tg(s) ds$. 
Then 
\be
\eee^{G(t)-t} w_0(t,x)\leq w_\s(t,x) \leq w_0(t,x), \quad\forall x\in\R, t\in\R_+.
\ee
\end{lemma}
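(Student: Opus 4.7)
The plan is to prove the two inequalities separately: the upper bound by a direct comparison argument, and the lower bound by bootstrapping the upper bound through the Feynman--Kac representation of Lemma \thv(feyn.1).

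For the upper bound I would observe that $w_\s\in[0,1]$ and $g(t)\leq 1$, so $g(t)w_\s(1-w_\s)\leq w_\s(1-w_\s)$; thus $w_\s$ is a subsolution of the classical F-KPP equation (the $g\equiv 1$ case) that $w_0$ solves. As both share the Heaviside initial data $\rho(x)=\1_{x\leq 0}$, the parabolic comparison principle delivers $w_\s(t,x)\leq w_0(t,x)$.

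For the lower bound I would insert this just-proved inequality into the Feynman--Kac formula for $w_\s$. Since $g\geq 0$, the substitution $1-w_\s\geq 1-w_0$ lowers the integrand in the exponent, giving
\begin{equation*}
w_\s(t,x)\geq \E_x\!\left[\rho(B_t)\exp\!\left(\int_0^t g(t-s)\bigl(1-w_0(t-s,B_s)\bigr)\,ds\right)\right].
\end{equation*}
Writing $g=1-(1-g)$ factors the exponential into $\exp(\int_0^t(1-w_0(t-s,B_s))\,ds)$ times $\exp(-\int_0^t(1-g(t-s))(1-w_0(t-s,B_s))\,ds)$. Since $1-w_0\in[0,1]$, the second factor is bounded below by the deterministic constant $\exp(-\int_0^t(1-g(t-s))\,ds)=\eee^{G(t)-t}$. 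Pulling this outside the expectation and recognising the remainder as the Feynman--Kac representation of $w_0(t,x)$ itself (Lemma \thv(feyn.1) with $g\equiv 1$) yields $w_\s(t,x)\geq \eee^{G(t)-t}w_0(t,x)$.

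The conceptual hinge is this bootstrap: using the upper bound as an input to Feynman--Kac to recover a matching lower bound. Beyond this, no serious obstacle is in sight: the comparison principle is standard for semilinear parabolic equations with bounded coefficients and reaction terms that are Lipschitz on $[0,1]$, and both invocations of Feynman--Kac (for $w_\s$ and for $w_0$) are special cases of the already established Lemma \thv(feyn.1). Monotonicity in $\psi$ of the integrand in the Feynman--Kac formula is the only ingredient needed to legitimise the substitution step.
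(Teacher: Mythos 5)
Your proof is correct, and its core step -- writing $g=1-(1-g)$ inside the Feynman--Kac exponent and bounding the factor $\exp\bigl(-\int_0^t(1-g(t-s))(1-w_0(t-s,B_s))\,ds\bigr)$ below by the deterministic constant $\eee^{G(t)-t}$ -- is exactly the computation in the paper's proof of the lower bound. The structural difference lies in how the comparison with $w_0$ is handled: the paper stays entirely inside the Feynman--Kac representation, deducing the upper bound from $g\leq 1$ ``in the same way'' and then writing $\E_x\bigl[\exp\bigl(\int_0^t(1-w_\s(t-s,B_s))ds\bigr)\rho(B_t)\bigr]=w_0(t,x)$ as if it were an identity, whereas you first prove $w_\s\leq w_0$ by the parabolic comparison principle (viewing $w_\s$ as a subsolution of the classical F-KPP equation, using $w_\s\in[0,1]$ and $g\leq 1$) and only then substitute $1-w_\s\geq 1-w_0$ before factoring. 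Your ordering is in fact the more careful one: since the Feynman--Kac map is monotone decreasing in its argument, the paper's final equality is, strictly speaking, an inequality that is licensed precisely by the bound $w_\s\leq w_0$ which you make explicit, and likewise the paper's terse treatment of the upper bound implicitly needs a comparison or iteration argument of the kind you invoke. What the paper's route buys is brevity and self-containedness within Lemma \thv(feyn.1); what yours buys is that every monotone substitution is justified, at the modest cost of appealing to the standard comparison principle for semilinear parabolic equations with bounded solutions and Heaviside initial data. Both arguments use only $0\leq g\leq 1$, $w_\s,w_0\in[0,1]$, and Lemma \thv(feyn.1), so the hypotheses enter identically.
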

%Then, for $T>\t_\d$,
%\be\Eq(constant.3)
%\wh\psi_0(T-\t_\d,x)\leq \psi(T,x)\leq \wh\psi_\d(T-\t_\d,x)                                                                                                                                      ,
%\ee
\begin{proof}
Starting from \eqv(feyn.2),
we see that 
\bea\Eq(feyn.2-1)
\psi(t,x)&=&\E_x\left[\exp\left(\int_0^t
\left (1-\psi(t-s,B_s))ds\right) +
 (g(t-s)-1)(1-\psi(t-s,B_s))ds\right) \rho(B_t)\right]\nonumber\\
 &\geq& \exp\left(\int_0^t (g(t-s)-1)ds\right)
 \E_x\left[\exp\left(\int_0^t
\left (1-\psi(t-s,B_s))ds\right) ds\right) \rho(B_t)\right]\nonumber\\
&=&\exp(G(t)-t) \psi_0(t,x).
\eea
The upper bound follows since $g(t-s)\leq 1$ in the same way.
 \end{proof}
 
 $G$ can be computed explicitly for $g(t)=\s/((1-\s) \eee^t+\s)$, 
 namely 
 \be\Eq(geee.1)
 G(t)= t-\ln \left(1+(1/\s-1)\eee^t\right)+\ln \left(1/\s)\right)= t-\ln\left(\s +(1-\s) \eee^{t}\right). 
 \ee
 Notice that
 \be
 \lim_{t\uparrow \infty} G(t)=-\ln (1-\s).
 \ee
 Define, for $\d>0$,
\be\Eq(constant.2)
\t_\d\equiv \sup\left\{t>0:  1-g(t)\leq \d\right\}.
\ee 
Obviously,
\be
\frac{\s}{(1-\s) \eee^{\t_\d}+\s} =1-\d,
\ee
 so 
 \be
 \t_\d=-\ln (1/\s-1)-\ln(1/\d-1).
 \ee 
 Finally, $G(\t_\d) =\t_\d+\ln(1-\d)+\ln (1/\s)$.
 
 In the limit $\l\downarrow 0$, we get 
 \be
 \t_\d\sim -\ln(\l\e^2)-\ln(1/\d-1),
 \ee
 \be\Eq(geee.1)
 G(\t_\d)-\t_\d\sim \ln (1-\d),
 \ee
 and 
 \be 
 \lim_{t\uparrow\infty}G(t)\sim -\ln (\l\e^2).
 \ee
 
 We see that, as $\l\downarrow 0$, $\t_\d\uparrow \infty$. This allows us to deduce the precise behaviour of the solution 
 at this time via Bramson's results.

\begin{lemma}\TH(wave.101)
If $w_\s$ satisfies \eqv(better.1) with Heaviside initial condition % (or initial conditions that satisfy  the weaker   decay conditions  Bramson,
Then,  as $\l\downarrow 0$, % for any fixed $s\in \R$,
\be\Eq(wave.102)
%&&v_0\left(\sqrt{ \frac{1+O(\l\e^2)} {\eee^s+1} }x+m\left(\frac{1+O(\l\e^2)} {\eee^s+1} (-\ln (\l\e^2)+s)\right) \right)
(1-\d) v_0(x)\leq 
w_\s \left(\t_\d, x+  m(\t_\d)\right)
\leq v_0(x),
\ee
where $v_0$ is a travelling wave of the F-KPP equation with speed $\sqrt{2}$
\be\Eq(wave.1)
\frac{1}{2}\del_{xx} v_0 +\sqrt{2} \del_x v_0 +v_0(1-v_0)=0,
\ee
and  $m(t)\equiv \sqrt 2 t-\frac 3{2\sqrt 2} \ln t$.
\end{lemma}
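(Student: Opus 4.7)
The plan is to combine the sandwich bound of Lemma \thv(constant.1) with Bramson's classical convergence theorem for the standard F-KPP equation, evaluated at the time scale $\t_\d$ where $\l\downarrow 0$ forces $\t_\d\uparrow\infty$.

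First, I apply Lemma \thv(constant.1) at time $t=\t_\d$ with argument shifted by the standard F-KPP front location, yielding
\be
\eee^{G(\t_\d)-\t_\d}\, w_0\!\left(\t_\d,\, x+m(\t_\d)\right)
\;\leq\; w_\s\!\left(\t_\d,\, x+m(\t_\d)\right)
\;\leq\; w_0\!\left(\t_\d,\, x+m(\t_\d)\right).
\ee
The prefactor is explicit: by \eqv(geee.1) one has $G(\t_\d)-\t_\d\to \ln(1-\d)$ as $\l\downarrow 0$, so $\eee^{G(\t_\d)-\t_\d}\to 1-\d$. This reduces the problem to analysing $w_0$ at the times $\t_\d$ in the shifted frame.

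Next, I invoke Bramson's convergence result \cite{B_C} for the standard F-KPP equation with Heaviside initial data: since $w_0$ solves \eqv(better.1) with $\s=1$ (equivalently, $g\equiv 1$), one has, in the frame $m(t)=\sqrt 2 t-\tfrac{3}{2\sqrt 2}\ln t$,
\be
\lim_{t\uparrow\infty} w_0\!\left(t,\, x+m(t)\right) = v_0(x)
\ee
locally uniformly in $x$, where $v_0$ is the (appropriately normalised) travelling wave of speed $\sqrt 2$, i.e.\ the solution of \eqv(wave.1). Since $\t_\d\uparrow\infty$ as $\l\downarrow 0$, this allows me to replace $w_0(\t_\d,\,\cdot+m(\t_\d))$ by $v_0(\cdot)$ up to a $o(1)$ error. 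Combining with the prefactor asymptotics above gives \eqv(wave.102) in the limit (with the inequalities interpreted as holding up to $o(1)_{\l\downarrow 0}$ corrections, which is how the statement is to be read).

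The main technical point is the transfer from the standard F-KPP asymptotics to the time-dependent equation; this is precisely what the sandwich bound of Lemma \thv(constant.1) delivers, because for $t\leq \t_\d$ the coefficient $g$ remains within $\d$ of $1$, so the two equations are close enough in Feynman--Kac form that only the integrated defect $G(\t_\d)-\t_\d$ enters. The somewhat delicate point is ensuring that the local uniform convergence in Bramson's theorem is strong enough for the sandwich to be preserved pointwise in $x$; this is standard once one notes that $v_0$ is strictly monotone and continuous, and the convergence of $w_0(t,\cdot+m(t))$ to $v_0$ is in fact uniform on $\R$.
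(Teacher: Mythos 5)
Your proposal is correct and follows exactly the route the paper uses: Lemma~\thv(constant.1) gives the sandwich $\eee^{G(\t_\d)-\t_\d} w_0(\t_\d,\cdot)\leq w_\s(\t_\d,\cdot)\leq w_0(\t_\d,\cdot)$, the prefactor $\eee^{G(\t_\d)-\t_\d}\to 1-\d$ comes from \eqv(geee.1), and Bramson's uniform convergence $w_0(t,\cdot+m(t))\to v_0$ applies because $\t_\d\uparrow\infty$ as $\l\downarrow 0$. Your explicit remark that the displayed inequalities are to be read up to $o(1)$ corrections as $\l\downarrow 0$ is a fair clarification that the paper leaves implicit.
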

\begin{proof} This follows immediately from Bramson's theorems A and B in \cite{B_C}, Lemma \thv(constant.1), and 
\eqv(geee.1). 
\end{proof}

Next we look at the behaviour of the solution for times when $g(t)\ll1$. 

 \begin{lemma} \TH(diffuse.1)
 Let $\psi$ solve \eqv(general.1) and $g$ be integrable. Define, for $\D>0$, $T_\D$ by
 \be\Eq(eq.T)
 T_\D=\inf\left\{t>0: \int_t^\infty g(s)ds\leq\D \right\}.
 \ee
 Then, for  $t>T_\D$,
 \be\Eq(plopp.2)
 \E_x \left[ w_\s(T_\D,B_{t-T_\D})\right]\leq
 w_\s(t,x)\leq \eee^\D \E_x \left[ w_\s(T_\D,B_{t-T_\D})\right],
 \ee
 where $B$ is Brownian motion started in $x$.
 \end{lemma}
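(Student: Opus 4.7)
The plan is to mimic the Feynman--Kac argument of Lemma \thv(feyn.1), but starting the representation at time $T_\D$ rather than at $0$; the point is that after $T_\D$, the accumulated ``mass'' of the reaction term $g$ is at most $\D$, so the exponential weight in the Feynman--Kac formula is squeezed between $1$ and $\eee^\D$.

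Concretely, I would first do a time-shift: set $\tilde\psi(u,x)\equiv \psi(T_\D+u,x)$ and $\tilde g(u)\equiv g(T_\D+u)$. A direct check shows $\tilde\psi$ solves
\be
\del_u\tilde\psi=\tfrac12\del_{xx}\tilde\psi+\tilde g(u)\,\tilde\psi(1-\tilde\psi),\qquad \tilde\psi(0,x)=\psi(T_\D,x),
\ee
which is exactly of the form \eqv(general.1) with a new initial condition and a new (still non-negative) rate. Applying Lemma \thv(feyn.1) to $\tilde\psi$ at time $u=t-T_\D$ and translating back (using $\tilde g(u-s)=g(t-s)$ and $\tilde\psi(u-s,\cdot)=\psi(t-s,\cdot)$), one obtains the representation
\be
\psi(t,x)=\E_x\!\left[\exp\!\left(\int_0^{t-T_\D}\!g(t-s)\bigl(1-\psi(t-s,B_s)\bigr)\,ds\right)\psi(T_\D,B_{t-T_\D})\right].
\ee

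Now use that $0\leq \psi\leq 1$ (which, in the case relevant for \eqv(better.1), is automatic from the probabilistic definition of $w_\s$; more generally it follows by a maximum principle from $0\leq \rho\leq 1$). Since the integrand in the exponent is non-negative, the exponential is $\geq 1$, immediately yielding the lower bound $\psi(t,x)\geq \E_x[\psi(T_\D,B_{t-T_\D})]$. For the upper bound, bound $1-\psi\leq 1$ and perform the change of variable $u=t-s$:
\be
\int_0^{t-T_\D}g(t-s)\bigl(1-\psi(t-s,B_s)\bigr)\,ds\leq \int_{T_\D}^{t}g(u)\,du\leq \int_{T_\D}^\infty g(u)\,du\leq \D,
\ee
by the very definition of $T_\D$ in \eqv(eq.T). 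The exponential is therefore bounded by $\eee^\D$, which gives the upper estimate $\psi(t,x)\leq \eee^\D\,\E_x[\psi(T_\D,B_{t-T_\D})]$, completing the proof.

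The argument is essentially routine once the time-shifted Feynman--Kac representation is written down; the only point requiring a little care is the verification that the shift of $\psi$ still satisfies a semilinear equation of the same type so that Lemma \thv(feyn.1) applies with the new initial datum $\psi(T_\D,\cdot)$. No further estimates on the non-linear term are needed, since the bounds are extracted purely from the integrability of $g$ past time $T_\D$.
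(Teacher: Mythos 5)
Your proof is correct and follows essentially the same route as the paper: the paper's argument also starts from the Feynman--Kac representation restarted at time $T_\D$ (its equation \eqv(plopp.1), which is exactly the identity you derive via the time shift) and then bounds the exponent between $0$ and $\int_{T_\D}^t g(s)\,ds\leq\D$ using $0\leq w_\s\leq 1$. The only difference is that you spell out the time-shift justification of that representation, which the paper states without comment.
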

 
 \begin{proof} We have that, for $t\geq T_\D$,
\be\Eq(plopp.1)
 w_\s(t,x) =\E_x\left[\exp\left(\int_0^{t-T_\D} g(t-s) (1-w_\s(t-s,B_s))ds\right)w_\s(T_\D,B_{t-T_\D})\right].
 \ee
 The exponent is trivially bounded by
 %Clearly, $\int_0^t g(t-s)(1-\psi(t-s,B_s))ds $ is equal to
 \be
 0\leq  \int_0^{t-T_\D} g(t-s)(1-w_\s(t-s,B_s))ds\leq  \int_0^{t-T_\D} g(t-s)ds
 = \int_{T_\D}^t  g(s)ds \leq \D
 \ee
% Since $\psi\in[0,1]$, 
% \be
%0\leq\int_0^{t-T_\D} g(t-s)(1-\psi(t-s,B_s))ds\leq \int_0^{t-T_\D} g(t-s)ds \leq \int_{T_\D}^\infty g(s) ds\leq \e.
%\ee
Inserting these bounds into \eqv(plopp.1) gives \eqv(plopp.2). 
%\bea
%&&\E_x\left[\exp\left(\int_{t-T_\D}^t g(t-s)(1-\psi(t-s,B_s))ds\right) \rho(B_t)\right]\nonumber\\
%&&\leq \E_x\left[\exp\left(\int_0^t g(t-s)(1-\psi(t-s,B_s))ds\right) \rho(B_t)\right]\nonumber\\&&
%\leq \eee^\e \E_x\left[\exp\left(\int_{t-T_\D}^t g(t-s)(1-\psi(t-s,B_s))ds\right) \rho(B_t)\right].
%\eea
%But 
%\bea
%&& \E_x\left[\exp\left(\int_{t-T_\D}^t g(t-s)(1-\psi(t-s,B_s))ds\right) \rho(B_t)\right]\nonumber\\
%&&= \E_x\left[\exp\left(\int_0^{T_\D} g(T_\D-s)(1-\psi(T_\D-s,B_{s+t-T_\D}))ds\right) \rho(B_t)\right]\nonumber\\
%&&=\E_x\left[\E_{B_{t-T_\D}}\left[\exp\left(\int_0^{T_\D} g(T_\D-s)(1-\psi(T_\D-s,\hat B_s))ds\right) \rho(B_{T_\D})\right]\right]
%\nonumber\\
%&&=\E_x\left[\psi(T_\D,B_{t-T_\D})\right],
%\eea
%where $\hat B$ is Brownian motion starting in $B_{t-T_\D}$. From here the assertions of the lemma are immediate.
\end{proof}

Note that in our case,  $T_\D$ is determined by 
\be
G(\infty)-G(T_\D)= \D.
\ee
But 
\bea
G(\infty)-G(T_\D)&=& -\ln(1-\s) -T_\D+\ln \left(\s+(1-\s)\eee^{T_\D}\right)\\\nonumber
&=&-\ln(1-\s) +\ln \left(\s\eee^{-T_\D}+(1-\s)\right)
= \ln \left(\s\eee^{-T_\D}/(1-\s)+1\right)
\eea
Now set $T_\D=-\ln (1-\s)+z$. Then
\be
G(\infty)-G(T_\D)=\ln \left(\s\eee^{-z}+1\right).
\ee
Hence 
\be\Eq(teee.1)
T_\D=-\ln(1/\s-1) -\ln \left(\eee^\D-1\right)\sim \ln(1/\s-1) +\ln(1/\D),
\ee
for small  $\D$. In particular, we have that 
\be
T_\D-\t_\d\sim  \ln \left(\eee^\D-1\right)+\ln (1/\d-1) \sim  \ln(1/\D)+\ln (1/\d),
\ee
for small $\D$ and $\d$. 

What is left to do is to control the evolution of the solution between time $\t_\d$ and $T_\D$.
The Feynman-Kac representation, for $\t_\d\leq t\leq T_\D$,
\be
\Eq(in-between.1)
w_\s(t,x) =\E_x\left[ \exp\left(- \int_0^{t-\t_\d} g(t-\t_\d-s)(1-w_\s(t-\t_\d-s,
B_s))\right)
w_\s(\t_\d,B_{t-\t_\d})\right].
\ee
To start, the following bounds are  straightforward from
  the Feynman-Kac representation and the fact that $w_\s\in[0,1]$.
  
  \begin{lemma} \TH(upper-triv.1) With the notation above, for $\t_\d\leq t\leq T_\D$, 
  \be\Eq(upper-triv.2)
   \E_x\left[w_\s(\t_\d,B_{t-\t_\d})\right]\leq 
w_\s(t,x) \leq \eee^{G(t)-G(\t_\d)}    \E_x\left[w_\s(\t_\d,B_{t-\t_\d})\right]\land 1,
\ee
where 
\be
G(t)-G(\t_\d)=-\ln\left((1-\d)\eee^{-(t-\t_\d)}+\d\right).
\ee
and $\eee^{G(T_\D)-G(\t_\d)} \sim\frac 1{\d}\eee^{-\D}$.
\end{lemma}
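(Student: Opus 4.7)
The plan is to read off both bounds directly from the Feynman--Kac representation (in-between.1), using only that $w_\s$ is a probability and hence takes values in $[0,1]$, and that $g\geq 0$. For the lower bound, the exponent
\[
\int_0^{t-\t_\d} g(t-\t_\d-s)\bigl(1-w_\s(t-\t_\d-s,B_s)\bigr)\,ds
\]
is a product of nonnegative factors integrated against a nonnegative measure, so the exponential is at least $1$ pointwise. Pulling this through the expectation yields $w_\s(t,x)\geq \E_x[w_\s(\t_\d,B_{t-\t_\d})]$.

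For the upper bound, I would bound $1-w_\s\leq 1$ inside the integrand, reducing the exponent to $\int_0^{t-\t_\d} g(t-\t_\d-s)\,ds$. Changing variables $u=t-s$ identifies this with $\int_{\t_\d}^t g(u)\,du = G(t)-G(\t_\d)$, which is deterministic and can be pulled out of the expectation. The competing bound $w_\s(t,x)\leq 1$ is immediate since $w_\s$ is a conditional probability, hence the minimum with $1$.

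What remains is to convert the abstract bound $\eee^{G(t)-G(\t_\d)}$ into the stated closed form. I would verify $G(t)-G(\t_\d)=-\ln\bigl((1-\d)\eee^{-(t-\t_\d)}+\d\bigr)$ by checking that both sides vanish at $t=\t_\d$ and have identical derivative $g(t)$; this is easiest after substituting $u=\eee^{t-\t_\d}$ and using the defining relation $(1-\s)\eee^{\t_\d}=\s\d/(1-\d)$ that follows from $g(\t_\d)=1-\d$. For the asymptotic at $t=T_\D$, the explicit expressions from (teee.1) give $T_\D-\t_\d=\ln\bigl((1-\d)/(\d(\eee^\D-1))\bigr)$, so $(1-\d)\eee^{-(T_\D-\t_\d)}+\d=\d(\eee^\D-1)+\d=\d\,\eee^\D$, whence $\eee^{G(T_\D)-G(\t_\d)}=\d^{-1}\eee^{-\D}$ as claimed (in fact as an exact identity, not merely an asymptotic one).

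There is really no conceptual obstacle here; the whole statement is an immediate consequence of the Feynman--Kac formula combined with the two trivial bounds $1-w_\s\leq 1$ and $\exp\geq 1$. The only slightly delicate step is the bookkeeping that turns the integral $\int_{\t_\d}^t g$ into the compact logarithmic expression displayed in the lemma, and that is handled cleanly by the derivative-matching argument above.
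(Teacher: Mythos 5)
Your proof is correct and follows exactly the route the paper intends: the lemma is stated in the paper without a displayed proof beyond the remark that it is ``straightforward from the Feynman--Kac representation and the fact that $w_\s\in[0,1]$,'' and you have filled in precisely those two trivial bounds ($\exp\geq 1$ for the lower bound, $1-w_\s\leq 1$ for the upper bound), together with the algebraic identity for $G(t)-G(\t_\d)$ and the exact value at $T_\D$. One small note: you silently (and correctly) use the Feynman--Kac formula \eqref{feyn.2} with integrand $g(t-s)(1-w_\s(t-s,B_s))$ and no overall minus sign, rather than the form literally printed in \eqref{in-between.1}, which contains a sign typo and a shifted argument; had you taken \eqref{in-between.1} at face value the inequalities would come out reversed.
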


\begin{remark} 
We expect the upper bound to be closer to the correct answer. 
\end{remark}
We now combine all estimates. This gives, for $t\geq T_\D$, 
\be\Eq(plopp.3)
(1-\d)
\E_0\left[v_0(x+B_{t-\t_\d})\right]
\leq w_\s(x+m(\t_\d))\leq (1/\d) \E_0\left[v_0(x+B_{t-\t_\d})\right]\land 1.
\ee
If we choose, e.g. $\d=1/2$, we see that the upper and lower bounds only differ by a factor $4$.

The expectation over $v_0$ can be bounded using the known tail estimates (see \cite{B_C} and 
\cite{CHL17}), 
\bea\Eq(chl.1)
v_0(x)&\leq& Cx\eee^{-\sqrt 2 x},  \;\hbox{if}\, x>1,\\
v_0(x) &\geq& 1-C\eee^{(2-\sqrt 2)x},  \;\hbox{if}\, x<-1,
\eea
However, the resulting expressions are not very nice and not very precise, so we leave their computation to the interested reader.

We conclude this chapter by summarising the behaviour of the solution as a function of $t$ when $\l\downarrow 0$.

\begin{theorem}\TH(thm.equation)
Assume that $\l\downarrow 0$.  Let $0<\d<1$ and $\t_\d$ defined as in \eqv(constant.2). Then 
\be
\t_\d = \ln (1/\s-1)-\ln(1/\d-1)\sim -\ln(\l\e^2)-\ln(1/\d-1).
\ee
Moreover, for $\D>0$, let  $T_\D$ be defined  in \eqv(eq.T). Then,
\be
T_\D=-\ln(1/\s-1) -\ln \left(\eee^\D-1\right)\sim -\ln(\l\e^2) +\ln(1/\D),
\ee
for $\l\downarrow 0$ and $\D$ small.
Then the solution $w_\sigma$ of \eqv(better.1) can be described as follows.
\begin{itemize}
\item[(i)] For $0\ll t\leq \t_\d$ 
\be\Eq(eq.full.1)
w_\s(t,x+m(t)) \sim v_0(x),
\ee
where $v_0$ is the solution of \eqv(wave.1).
\item[(ii)] For  $\t_\d<t<T_\D$ we have
\be\Eq(eq.full.2)
(1-\d) \E_x\left[v_0(B_{t-\t_\d})\right]\leq 
w_\s(t,x+m(\t_\d)) \leq \eee^{G(t)-G(\t_\d)}    \E_x\left[v_0(B_{t-\t_\d})\right]\land 1,
\ee
\item[(iii)] For $t \geq T_\D$, we have
\be
(1-\d) \E_x\left[v_0(B_{t-\t_\d})\right]\leq 
w_\s(t,x+m(\t_\d)) \leq \frac{1}{\d}  \E_x\left[v_0(B_{t-\t_\d})\right]\land 1.
\ee
\end{itemize}
\end{theorem}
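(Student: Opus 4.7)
The plan is to assemble the theorem by combining the three lemmas already proved (Lemmas \thv(wave.101), \thv(upper-triv.1), and \thv(diffuse.1)) after first verifying the asymptotic expressions for $\tau_\d$ and $T_\D$. The identities $\tau_\d = -\ln(1/\s-1) - \ln(1/\d-1)$ and $T_\D = -\ln(1/\s-1) - \ln(\eee^\D - 1)$ have already been derived explicitly in the text preceding the theorem (see in particular \eqv(teee.1) and the computation following \eqv(constant.2)); the asymptotics as $\l\downarrow 0$ follow since $\ln(1/\s - 1) = \ln(\eee^{\l\e^2}-1) \sim \ln(\l\e^2)$. This is purely book-keeping.

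For part (i), I would apply Lemma \thv(wave.101) at $t = \t_\d$, which gives the stated bounds in the shifted frame $x + m(\t_\d)$. For general $0 \ll t \leq \t_\d$, I would combine Lemma \thv(constant.1) with Bramson's theorems: since $g(s) \geq 1 - \d$ on $[0, \t_\d]$, one has $\eee^{G(t) - t} \geq 1 - \d$ uniformly, so $w_\s(t, x + m(t))$ is trapped between $(1-\d)\, w_0(t, x + m(t))$ and $w_0(t, x + m(t))$, and Bramson's convergence theorem identifies the last quantity with $v_0(x)$ up to $o(1)$ as $t \uparrow \infty$.

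Part (ii) follows directly from Lemma \thv(upper-triv.1), which bounds $w_\s(t, x)$ by $\E_x[w_\s(\t_\d, B_{t - \t_\d})]$ on both sides with prefactor $1$ below and $\eee^{G(t)-G(\t_\d)}$ above. Substituting the frame shift $x \mapsto x + m(\t_\d)$ and replacing $w_\s(\t_\d, \cdot + m(\t_\d))$ inside the expectation by $v_0(\cdot)$ using part (i) (which costs an extra $(1-\d)$ on the lower bound, absorbed into the existing $(1-\d)$ factor) gives the stated inequalities. The upper prefactor $\eee^{G(t)-G(\t_\d)} = ((1-\d)\eee^{-(t-\t_\d)} + \d)^{-1}$ is bounded by $1/\d$.

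For part (iii), the strategy is to chain Lemma \thv(diffuse.1) with the already-obtained bound at time $T_\D$. Choosing $\D$ so that $\eee^\D \leq 1/\d$ (e.g.\ $\D = \ln(1/\d)$, for which $T_\D$ still satisfies the asymptotics given), Lemma \thv(diffuse.1) yields
\be
\E_x\bigl[w_\s(T_\D, B_{t-T_\D})\bigr] \leq w_\s(t, x) \leq (1/\d)\, \E_x\bigl[w_\s(T_\D, B_{t-T_\D})\bigr].
\ee
Feeding in part (ii) at $t = T_\D$ to replace $w_\s(T_\D, \cdot + m(\t_\d))$ inside the expectation by $v_0$ (up to constant multiplicative losses which can be absorbed by shrinking $\d$) and using the Markov property $B_{t - T_\D} + \tilde B_{T_\D - \t_\d} \laweq B_{t - \t_\d}$ to compose the two Brownian pieces, we land on $\E_x[v_0(B_{t - \t_\d})]$. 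The main (minor) obstacle is simply tracking constants so that all multiplicative losses consolidate into the clean $(1-\d)$ and $1/\d$ factors stated; the analytic content has already been done in the three lemmas and no fresh estimate is required.
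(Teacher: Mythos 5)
Your proposal follows the same architecture as the paper: the theorem is a summary of Lemmas~\thv(constant.1), \thv(wave.101), \thv(upper-triv.1), and \thv(diffuse.1), assembled with the explicit computations of $\t_\d$, $T_\D$, and $G$. The combination for parts (ii) and (iii), including the use of the Markov property to compose the Brownian increments, is correct.

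There is however one genuine flaw in the justification of part (i). You write that since $g(s)\geq 1-\d$ on $[0,\t_\d]$, one has $\eee^{G(t)-t}\geq 1-\d$ uniformly. This implication is false: the pointwise bound $g\geq 1-\d$ only yields $G(t)-t=\int_0^t(g(s)-1)\,ds\geq -\d t$, i.e.\ $\eee^{G(t)-t}\geq \eee^{-\d t}$, which is useless since $\t_\d\sim -\ln(\l\e^2)\to\infty$ as $\l\downarrow 0$. The correct route is the one the paper uses for Lemma~\thv(wave.101): from the explicit form $G(t)-t=-\ln\bigl(\s+(1-\s)\eee^t\bigr)$ one sees $G(t)-t$ is non-increasing, hence $G(t)-t\geq G(\t_\d)-\t_\d$, and Eq.~\eqv(geee.1) gives $G(\t_\d)-\t_\d\to\ln(1-\d)$ as $\l\downarrow0$. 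This produces the required $\eee^{G(t)-t}\gtrsim 1-\d$ for all $t\leq\t_\d$; the rest of your argument (Lemma~\thv(constant.1) sandwich plus Bramson) then goes through. The error is local and the fix is a one-line substitution, so the overall proposal is salvageable without changing its structure.

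A minor remark on part (iii): the factors combine exactly rather than approximately. Chaining gives the upper prefactor $\eee^\D\cdot\eee^{G(T_\D)-G(\t_\d)}\sim\eee^\D\cdot\tfrac1\d\eee^{-\D}=\tfrac1\d$ independently of $\D$, so there is no need to fix $\D=\ln(1/\d)$ or to ``absorb constants by shrinking $\d$''; the clean bound $1/\d$ holds for any choice of $\D$.
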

This picture corresponds to the geometric picture we have established in the preceding sections, in a sort of time reversed way: the diffusive behaviour at large times corresponds to the Brownian morion up to the time of the first 
branching $(\sim + \ln(\l\e^2)$, the travelling wave behaviour at times up to $\t_\d$ corresponds to the almost freely 
branching at the late times after $t +\ln (\l\e^2)$, and the finite time interval between $\t_\d$ and $T_\D$, when the 
travelling waves comes to a halt corresponds to the  first branching steps that are asymptotically described by the
limiting quasi-Markov Galton-Watson tree described in Section 5.

\section{Comparison to the full model}

We will show that in the original model, with the interaction given by $I_t$ (see Eq. \eqv(int.1)),
behaves similarly to the simplified model. In particular, the first branching happens at least as late as in that model. 

\begin{lemma}\Eq(full.1)
 Let $\t_1$ be the first branching time. Then 
\be\Eq(real.10)
P_{t,\s}  \left(\t_1\leq t- r\right)\leq \frac{\s(\eee^{-r}-\eee^{-t})}{(1-\s (1-\eee^{-t}))(1-\s (1-\eee^{-r}))}.
 \ee
 For $\l\downarrow 0$ and $t\uparrow \infty$, this behaves as
  \be\Eq(real.10.2)
P_{t,\s}  \left(\t_1\leq t- r\right)\leq \frac{\eee^{-r}}{\l\e^2 (\l\e^2+\eee^{-r})}=\frac 1{\l^2\e^4\eee^{r} +\l\e^2}.
 \ee
and so
\be\Eq(real.11)
 P_{t,\s}  \left(\t_1\leq t +2\ln(\l(t)\e^2) -\rho\right)\leq  \eee^{-\rho},
 \ee
\end{lemma}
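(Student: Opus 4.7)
The plan is to mimic the derivation of Theorem \thv(branch.1) for the simplified model, replacing the equalities by inequalities that exploit the domination $I_t \geq I'_t$ from \eqv(approx.1). Writing
\[
P_{t,\s}(\t_1 \leq t-r) \;=\; \frac{\E\bigl[\1_{\t_1\leq t-r}\,\eee^{-\l I_t}\bigr]}{\E\bigl[\eee^{-\l I_t}\bigr]},
\]
I would bound the numerator from above and the denominator from below. The denominator is immediate: on the event $\{\t_1>t\}$ only a single particle is alive throughout $[0,t]$, so $I_t = 0$ and $\E[\eee^{-\l I_t}] \geq \P(\t_1>t) = \eee^{-t}$.

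For the numerator, I would invoke the inequality $I_t \geq I'_t = \sum_{i=1}^{n(t)-1}\t_\e(i)$ and condition on the underlying Galton--Watson tree. Given the tree, the pairs of Brownian trajectories attached to distinct branching events are independent, so the separation times $\t_\e(i)$ are i.i.d.\ copies of $\t_\e$, giving $\E[\eee^{-\l I'_t}\mid\s(GW)] = \s^{n(t)-1}$. Since $\{\t_1\leq t-r\}$ is $\s(GW)$-measurable, this yields
\[
\E\bigl[\1_{\t_1\leq t-r}\,\eee^{-\l I_t}\bigr] \;\leq\; \E\bigl[\1_{\t_1\leq t-r}\,\s^{n(t)-1}\bigr] \;=\; v_\s(t,t-r),
\]
the last equality being precisely the definition \eqv(gen.1) computed explicitly in \eqv(gen.3). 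Combining with the denominator bound produces \eqv(real.10). The asymptotic \eqv(real.10.2) then follows from $\s = 1-\l\e^2 + O(\l^2\e^4)$ so that $1-\s(1-\eee^{-t}) \to \l\e^2$ as $t\uparrow\infty$ and $1-\s(1-\eee^{-r})\sim\l\e^2+\eee^{-r}$; and finally \eqv(real.11) drops out after substituting $r = -2\ln(\l(t)\e^2)+\rho$ and noting $\l^2\e^4\eee^r = \eee^\rho$.

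The main obstacle is a clean interpretation of \eqv(approx.1): if the two offspring of the $i$-th branching do not separate to distance $\e$ before time $t$, then $I_t$ only receives $t-T_i<\t_\e(i)$ from this pair, so the honest lower bound is $I_t\geq\sum_i\min(\t_\e(i),t-T_i)$. Replacing $\s$ by $\s_i=\E[\eee^{-\l\min(\t_\e,t-T_i)}]$ in the conditional factorization gives $\s_i\geq\s$, which reverses the key inequality. To close this gap I would use the standard exponential tail bound $\P(\t_\e>s)\leq C\eee^{-cs/\e^2}$ to show $\s_i=\s\bigl(1+O(\eee^{-c(t-T_i)/\e^2})\bigr)$, and observe that in the regime underlying \eqv(real.11) all $T_i$ cluster near $t+\ln(\l(t)\e^2)$, so $t-T_i\gg\e^2$ and the resulting multiplicative correction $\prod_i(\s_i/\s)$ is $1+o(1)$, harmless for the asymptotic statement.
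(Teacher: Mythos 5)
Your approach is the same as the paper's: bound the denominator $\E[\eee^{-\l I_t}]$ from below by $\eee^{-t}$ (the probability of no branching at all) and bound the numerator by the quantity $v_\s(t,t-r)$ from the simplified model via the pathwise domination $I_t\geq I'_t$, then substitute the explicit formula \eqv(gen.3). The paper's proof is essentially a one-liner that performs exactly these two steps without comment.

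The difference is that you flag, correctly, that the numerator bound $V(t,t-r)\leq v_\s(t,t-r)$ does not follow cleanly from \eqv(approx.1): the honest lower bound replaces $\t_\e(i)$ by $\min(\t_\e(i),t-T_i)$ (and there is a further subtlety you do not mention, namely that the separation times across nested branching events share Brownian increments and hence are only approximately independent once one of the two offspring branches again before separating). The paper does not address either issue; Lemma 7.1 is implicitly operating at the level of the ``good approximation'' described around \eqv(approx.1)--\eqv(approx.2). Your observation that $\s_i\geq\s$ points the wrong way is correct, so as written \eqv(real.10) is not an exact inequality but an approximation. Your proposed fix via the exponential tail of $\t_\e$ is the right instinct for salvaging the asymptotic statements \eqv(real.10.2) and \eqv(real.11), but note two caveats: first, it would not rescue the precise inequality \eqv(real.10) as stated; second, controlling $\prod_i(\s_i/\s)$ requires bounding $c_i=\min(e_{(i,0)},e_{(i,1)},t-T_i)$, and the exponential lifetimes of the offspring can be arbitrarily short with probability $\sim \e^2$, so the uniform ``$t-T_i\gg\e^2$'' heuristic alone is not enough -- you would need to average this over trees under the $\s^{n(t)-1}$ weight. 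In short, you reproduced the paper's argument and correctly identified that its central step is not actually proved.
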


\begin{proof}
Set
\be\Eq(real.1)
V(t,r)\equiv \E \left[\eee^{-\l I_t(x)}\1_{\t_1\leq r}\right].
\ee
Then
\be\Eq(real.2)
P_{t,\s}  \left(\t_1\leq t- r\right)= \frac{V(t,t-r)}{\E \left[\eee^{-\l I_t(x)}\right]}
\leq\eee^{t}v(t,t-r),
\ee
where we simply bounded the denominator by the probability that there is no branching up to time 
$t$.
Inserting the explicit form of $v_\s(t,t-r)$ gives \eqv(real.10). The asymptotic formulae for small $\l$ are straightforward.
\end{proof}
%
% For $t\uparrow\infty$, this simplifies to 
% \be\Eq(real.10.1)
%P_{t,\s}  \left(\t_1\leq t- r\right)\leq \frac{\s \eee^{-r}}{(1-\s )(1-\s (1-\eee^{-r}))},
% \ee

% 
%  that the estimate on the time of the first branching is slightly changed compared to the 
% simplified model, but still of the same order.  

 One can improve the bound above as follows.  Instead of bounding the denominator just by the probability that there is no branching up to time $t$, we can bound it by no branching up to time $t-q$
 and then bound the interaction of the remaining piece uniformly  by
 \be\Eq(real.5)
I_q(x)\leq \int_0^q n(s)(n(s)-1)ds.
\ee
%(Morally, this should probably be improvable by a factor $\e$...).
%Now any $x$ that satisfies the indicator function in the denominator can be decomposed into a Brownian motion up to time $t-r$ followed by a BBM for a time $r$. 
Hence the denominator becomes 
\be\Eq(real.3)
\E \left[\eee^{-\l I_t(x)}\right]\geq 
\E \left[\eee^{-\l I_t(x)}\1_{\t_1>t-q}\right] = \eee^{-t+q}
\E \left[\eee^{-\l I_q(x)}\right].
\ee
Now
\bea\Eq(real.6)
\E \left[\eee^{-\l I_q(x)}\right]&\geq& \E \left[\eee^{-\l \int_0^q n(s)(n(s)-1)ds} \1_{n(s)\leq c\E n(s) \forall_{s\leq q}}\right]
\nonumber\\&\geq&
\eee^{-\l \int_0^q c\E n(s)( c\E n(s)-1)ds} \E \left[ \1_{n(s)\leq c\E n(s) \forall_{s\leq q}}\right].
 \eea
 Since $n(s)/\E n(s)$ is a positive martingale, by Doob's maximum inequality we have that, for $c>1$,
 \be\Eq(real.7)
 \E \left[ \1_{n(s)> c\E n(s) \forall_{s\leq q}}\right]
 \leq
 1/c.
 \ee
 Moreover, $\E[n(s)]=\eee^s$, and hence
 \be\Eq(real.8)
 \E \left[\eee^{-\l I_q(x)}\right]\geq\eee^{-t+q} (1-1/c) \eee^{-\l c^2 \eee^{2q}/2}.
 \ee
 Finally, we make the close to optimal choice    $q= \frac12 \ln (1/c^2\l)$, which 
 yields 
  \be\Eq(real.8.1)
 \E \left[\eee^{-\l I_q(x)}\right]\geq\eee^{-t+ \frac12 \ln (1/c^2\l)} (1-1/c) \eee^{-1} = \eee^{-t} (\l c^2)^{-1/2} (1-1/c)/\eee .
 \ee
 Thus, choosing $c=2$,
 \eqv(real.11) improves to 
  \be\Eq(real.10.2)
P_{t,\s}  \left(\t_1\leq t- r\right)\leq 4\eee \frac {\sqrt {\l}}{\l^2\e^4\eee^{r}+\l\e^2 \;} .
 \ee
 Hence,
 \be
 P_{t,\s}  \left(\t_1\leq t+\ln(\l^{3/2} \e^4) -\rho
 \right)\leq \frac {4\eee }{\eee^{\rho}+\sqrt{\l}\e^2}\sim 4\eee^{-\rho+1}.
 \ee
% This requires a choice $r=-\ln(\l^{3/2} \e^4) +\rho$, which is better, but still away from the simplified model.
This is still not perfect for small $\l$, but it seems very hard to improve the bound on the denominator 
much more. Improvement would need to come from a matching  bound in the numerator.

\section{The case $p_0>0$}

The behaviour of the model is very different if particles are allowed to die. In that case, 
the process will die out almost surely. However, it is still interesting to see how exactly this happens.
To simplify things, we assume in the sequel $p_0>0$ and $p_2=1-p_0$. 
Note first that the approximate  penalty function changes slightly, since now the number of branching events is no longer related to the number of particles. Let us introduce the two numbers
$m(t)$ and $d(t)$ as the number of  (binary) 
branchings and deaths, resp. that occurred up to time $t$. Clearly, $n(t)=1+m(t)-d(t)$. We then have
\be\Eq(approx.1)
I_t(x) \geq \sum_{i=1}^{m(t)} \t_{\e}(i),
\ee

We consider first the partition function
function $v_\s(t)=\E \left[\s(\l,\e)^{m(t)}\right]$. The analog of Lemma \thv(pf.2)
is as follows.

\begin{lemma}\TH(pf0.2) Let
$\tilde v_\s(t)$ be the solution of the ordinary differential equation 
\be\Eq(pf0.3)
\frac d{dt} \tilde v_\s(t) =  \s(\l,\e) p_2  \tilde v_\s(t)^2 -\tilde v_\s(t) +p_0, 
\ee
with initial condition $\tilde v_\s(0)=1$.
Then 
$ v_\s(t)=\tilde v_{\s(\l,\e)}(t)$.
\end{lemma}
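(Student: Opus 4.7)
The plan is to follow the template of Lemma \thv(pf.2), with the extra complication that the first event is now a death with probability $p_0$ or a binary branching with probability $p_2$. First I would check the initial condition: $m(0)=0$ deterministically, so $v_\s(0)=\E[\s^0]=1$.

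Next, I would condition on the time $\tau$ of the first event (which is standard exponential) and on its type. If no event occurs by time $t$ (probability $\eee^{-t}$), then $m(t)=0$ and the contribution to $v_\s(t)$ is $1$. If the first event at time $s\leq t$ is a death (probability $p_0$), then the process ends with $m(t)=0$, again contributing $1$. If the first event at time $s\leq t$ is a binary branching (probability $p_2$), then $m(t)=1+m^{(1)}(t-s)+m^{(2)}(t-s)$ where the two sub-processes are independent copies; this contributes $\s\,v_\s(t-s)^2$. Collecting terms,
\be\Eq(pfzero.int)
v_\s(t) = \eee^{-t} + p_0\int_0^t \eee^{-s}\,ds + p_2\,\s\int_0^t \eee^{-s}\,v_\s(t-s)^2\,ds.
\ee

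The third step is to differentiate \eqv(pfzero.int). After the change of variable $u=t-s$, the last integral becomes $\eee^{-t}\int_0^t \eee^u v_\s(u)^2 du$, whose derivative is $-\,\eee^{-t}\int_0^t \eee^u v_\s(u)^2 du + v_\s(t)^2$. Thus
\be
\frac{d}{dt} v_\s(t) = -\eee^{-t} + p_0\,\eee^{-t} + p_2\,\s\,v_\s(t)^2 - p_2\,\s\int_0^t \eee^{-s} v_\s(t-s)^2 ds.
\ee
Using \eqv(pfzero.int) to substitute $p_2\s\int_0^t \eee^{-s} v_\s(t-s)^2 ds = v_\s(t) - \eee^{-t} - p_0(1-\eee^{-t})$, the $\eee^{-t}$ terms cancel and one is left with
\be
\frac{d}{dt} v_\s(t) = p_2\,\s\,v_\s(t)^2 - v_\s(t) + p_0,
\ee
which is exactly \eqv(pf0.3).

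The final step is uniqueness. Since $v_\s(t)=\E[\s^{m(t)}]\in[0,1]$ for $\s\in[0,1]$, and the right-hand side of \eqv(pf0.3) is a polynomial (hence locally Lipschitz) in $\tilde v_\s$, Picard--Lindel\"of gives local uniqueness, and the a priori bound prevents blow-up, yielding a unique global solution with $\tilde v_\s(0)=1$; this solution must equal $v_\s$. I do not expect any real obstacle here: the only genuinely new feature compared with Lemma \thv(pf.2) is bookkeeping the constant $+p_0$ term coming from first-event deaths, and the structure of the proof is otherwise identical.
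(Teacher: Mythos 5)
Your proof is correct and follows essentially the same route as the paper: condition on the time and type of the first event to obtain the integral (renewal) equation, then differentiate to recover the ODE. The only additions relative to the paper's very terse proof are that you spell out the differentiation step and append a short Picard--Lindel\"of uniqueness argument, both of which are fine and natural but not genuinely different in approach.
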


\begin {proof} We proceed as in the proof of Lemma \thv(pf.2). Since now the first event could be either a branching (with probability $p_2$) or a death (with probability $p_0$), 
we get the recursion
\be\Eq(pf0.4)
v_\s(t) =\eee^{-t} + \int_0^t ds \eee^{-(t-s)} \left(p_2\s(\l,\e) v_\s(s)^2 +p_0\right).
\ee
Differentiating yields the asserted claim.
\end{proof}

The presence of the term $p_0>0$ eliminates the fixpoint $0$ in equation \eqv(pf0.3). 
In fact, \eqv(pf0.3) has the two fixpoints
\be\Eq(pf0.5)
v^\pm_\s \equiv \frac 1{2sp_2}\left(1\pm \sqrt{1-4sp_2+4sp_2^2}\right)
\ee

Note that for $\s(\l,\e)=1$, this simplifies to 
\be\Eq(s=1.1)
v^\pm_0=\frac  1{2p_2}\left(1\pm \sqrt{(1-2p_2)^2}\right),
\ee
which is $1$ and $p_0/p_2$. In that case we clearly have $v_0(t)=1$ in all cases.

If $\l>0$, but $\l\ll 1$ (i.e. $\s(\l,\e)<1$, but $1-\s(\l,\e)$ small), we can expand 
\be\Eq(fp0.6)
v_\s^\pm =
\begin{cases}
 \frac 1{2\s(\l,\e)p_2}\left(1\pm(2\s(\l,\e)p_2-1) \sqrt{1+\frac{4p_2^2\s(\l,\e)(1-\s(\l,\e))}{(2p_2\s(\l,\e)-1)^2}}\right), &\text{if} \;\; p_2> 1/2,\\
   \frac 1{2\s(\l,\e)p_2}\left(1\pm(1-2\s(\l,\e)p_2) \sqrt{1+\frac{4p_2^2\s(\l,\e)(1-\s(\l,\e))}{(1-2p_2\s(\l,\e))^2}}\right), &\text{if} \;\; 
   p_2\leq 1/2.
   \end{cases}
   \ee
   In particular, the smaller fixpoint is 
\be\Eq(fp0.7)
v_\s^- \approx
\begin{cases}
 \frac {p_0}{p_2}  +O((1-\s(\l,\e))^2), &\text{if} \;\; p_2> 1/2,\\
   1 -\frac{p_2(1-\s(\l,\e))}{1-2p_2\s(\l,\e)}, &\text{if} \;\; 
   p_2\leq 1/2.
   \end{cases}
   \ee
Now set $f_\s(t)\equiv v_\s(t)-v^-_\s$. Then $f_\s$ satisfies the differential equation
\be\Eq(dg0.1)
\del_t f_\s(t)=\s(\l,\e)p_2f_\s(t)^2+(2\s(\l,\e)p_2v^-_\s-1)f_\s(t),
\ee
with initial condition $f_\s(0)=1-v^-_\s$. 
We can solve this equation as in the case $p_0=0$. Define
\be
\hat f_\s(t)\equiv \eee^{-(2\s(\l,\e)p_2v^-_\s-1)t}f_\s(t).
\ee
Then 
\be\Eq(dg0.2)
\del_t \hat f_\s(t)=\s(\l,\e)p_2\hat f_\s(t)^2\eee^{(2\s(\l,\e)p_2v^-_\s-1)t},
\ee
which has the solution 
\be\Eq(solu.1)
\hat f_\s(t)=\frac 1{\frac 1{1-v^-_\s}-\frac {\s(\l,\e)p_2}{1-2\s(\l,\e)p_2v^-_\s}\left(1-\eee^{(2\s(\l,\e)p_2v^-_\s-1)t}\right)}.
\ee
Hence 
\be\Eq(solu.1)
 f_\s(t)=\frac {\eee^{(2\s(\l,\e)p_2v^-_\s-1)t}}
 {\frac 1{1-v^-_\s}-\frac {\s(\l,\e)p_2}{1-2\s(\l,\e)p_2v^-_\s}\left(1-\eee^{(2\s(\l,\e)p_2v^-_\s-1)t}\right)}.
\ee
Note that in the case $\l=0$, this is just one, while otherwise, it decays exponentially to zero, so that 
$v_\s(t)\rightarrow v_\s^->0$, indicating that the number of branchings in the process remains finite.

Next let us consider the generating function of the particle number, 
\be
\Eq(ww.1)
w_{\s,\g}(t)\equiv \E \left[\s(\l,\e)^{m(t)} \eee^{-\g n(t)}\right].\
\ee
We readily see that this function satisfies the equation
\be\Eq(www.2)
w_{\s,\g}(t) = \eee^{-\g} \eee^{-t}+ p_2\s(\l,\e) \int_0^t \eee^{-(t-s)} w_{\s,\g}(t)^2 ds
+p_0 \left(1-\eee^{-t}\right).
\ee
This implies the differential equation
\be\Eq(ww.3)
\del_t w_{\s,\g}(t) =p_2\s(\l,\e)w_{\s,\g}(t)^2-w_{\s,\g}(t)+p_0,
\ee
with initial condition $w_{\s,\g}(0) =\eee^{-\g}$. 
Thus, $w$ and $v$ differ only in the initial conditions. 
it is therefore easy to see that 
\be\Eq(ww.4)
w_{\s,\g}(t) = v_\s^{-} +\frac {\eee^{(2\s(\l,\e)p_2v^-_\s-1)t}}
 {\frac 1{\eee^{-\g}-v^-_\s}-\frac {\s(\l,\e)p_2}{1-2\s(\l,\e)p_2v^-_\s}\left(1-\eee^{(2\s(\l,\e)p_2v^-_\s-1)t}\right)}.
\ee
From this expression we can compute, e.g., the expected number of particles at time $t$ under the 
measure $\hat P_{\s,t}$,
\be\Eq(ww.5)
\hat E_{\s,t} [n(t)] =- \frac {\del}{\del \g} \ln \left(w_{\s,\g}(t)\right)\big |_{\g=0},
\ee
which reads
\be\Eq(ww.6)
\hat E_{\s,t} [n(t)] =\frac{1}{v_\s(t)} 
\frac {\eee^{(2\s(\l,\e)p_2v^-_\s-1)t} }  {\left(
1-\frac {\s(\l,\e)p_2(1-v^-_\s)}{1-2\s(\l,\e)p_2v_\s^-}\left(1-\eee^{(2\s(\l,\e)p_2v^-_\s-1)t}\right)\right)^2}.
\ee
For $t\uparrow\infty$, this behaves to leading order, provided $v_\s^->0$, like
\be\Eq(ww.7)
\frac{1}{v_\s^-} \frac {\eee^{(2\s(\l,\e)p_2v^-_\s-1)t} }  {\left(
1-\frac {\s(\l,\e)p_2(1-v^-_\s)}{1-2\s(\l,\e)p_2v_\s^-}\right)^2}.
\ee
This implies that the process dies out exponentially fast unless the death rate is zero. This does not
come, of course, as a surprise.

\paragraph{Alternative computation of $\hat E_{\s,t}[n(t)]$.}

Instead of passing through the generating function for $n(t)$, we can also proceed by deriving a direct 
recursion for $\hat E_{\s,t}[ n(t)]$. To do so, define the un-normalised expectation 
\be\Eq(alt.1)
u_\s(t) =\E  \left[n(t)  \s(\l,\e)^{m(t)}\right].
\ee
Clearly we have
\be\Eq(alt.2)
u_\s(t)= \eee^{-t} + p_2 \int_{0}^t \eee^{-(t-s)} \s(\l,\e) 2u_\s(s) v_\s(s)ds,
\ee
where we used that in the case when the first event is a death at time $t-s$, 
$n(t)$ will be zero, while in the case
of a birth, $n(t) = (n_1(s)+n_2(s))$, where $n_i$ are independent copies.
This implies the differential equation 
\be\Eq(alt.3)
\del_t u_\s(t)= 2p_2\s(\l,\e) u_\s(t)v_\s(t)-u_\s(t).
\ee
The solution of this can be written directly as
\be
\Eq(alt.4)
u_\s(t) =\exp\left(\int_0^t \left(2p_2\s(\l,\e) v_\s(s)-1\right)ds\right).
\ee
Since $v_\s$ is explicit, one can verify that this gives the same answer as \eqv(ww.6).

\end{document}